\theoremstyle{plain}
\newtheorem{corollary}{Corollary}
\newtheorem{definition}{Definition}
\newtheorem{lemma}{Lemma}
\newtheorem{proposition}{Proposition}
\newtheorem{remark}{Remark}
\newtheorem{theorem}{Theorem}
\numberwithin{equation}{section}
\newcommand{\al}{\alpha}
\newcommand{\lap}{\Delta_g}
\newcommand{\eps}{\epsilon}
\newcommand{\lapp}{(-\Delta_g)^{\alpha/2} }
\newcommand{\ls}{\lesssim}
\newcommand{\gs}{\gtrsim}
\begin{document}
\title[Fractional Schr\"odinger]
 {$L^p$ Eigenfunction bounds for fractional Schr\"odinger operators on manifolds}

\author{Xiaoqi Huang, Yannick Sire, and Cheng Zhang}


\address{Department of Mathematics, Johns Hopkins University, Baltimore, MD 21218, United States}
\email{xhuang49@jhu.edu; sire@jhu.edu}
\address{Department of Mathematics, University of Rochester, Rochester, NY 14627, United States}
\email{czhang77@ur.rochester.edu}


\keywords{}

\dedicatory{}

\begin{abstract}
This paper is dedicated to $L^p$ bounds on eigenfunctions of a Sch\"odinger-type operator $(-\Delta_g)^{\alpha/2} +V$ on closed Riemannian manifolds for critically singular potentials $V$. The operator $(-\Delta_g)^{\alpha/2}$ is defined spectrally in terms of the eigenfunctions of $-\Delta_g$. We obtain also quasimodes and spectral clusters estimates. As an application, we derive Strichartz estimates for the fractional wave equation $(\partial_t^2+(-\Delta_g)^{\alpha/2}+V)u=0$.  The wave kernel techniques recently developed by Bourgain-Shao-Sogge-Yao \cite{BSSY} and Shao-Yao \cite{sy} play a key role in this paper. We construct a new reproducing operator with several local operators and some good error terms. Moreover, we shall prove that these local operators satisfy certain variable coefficient versions of the ``uniform Sobolev estimates'' by Kenig-Ruiz-Sogge \cite{KRS}. These enable us to handle the critically singular potentials $V$ and prove the quasimode estimates.

\end{abstract}

\maketitle

\tableofcontents

\section{Introduction}

Quasimodes and eigenfunctions estimates for a Schr\"odinger operator have been considered by Blair, Sogge and the second author in \cite{BSS19}. The aim of the present article is to investigate these estimates when one considers a fractional Schr\"odinger operator of the form  $(-\Delta_g)^{\alpha/2} +V$ on compact Riemannian manifolds $(M,g)$ of dimension $n\ge2$. Here $0<\alpha<2$ is the L\'evy index, and the operator $(-\Delta_g)^{\alpha/2}$ stands for the so-called {\sl spectral fractional Laplacian} classically defined by functional calculus. We shall deal with real-valued  potentials $V(x)$ with critical singularities. The fractional Schr\"odinger operator arises in equations related relativistic models in stellar collapse (see e.g. \cite{LiebYau1,LiebYau2,daubechies,carmona,FLS1,FLS2}). See also \cite{Laskin,Laskin2,Laskin3}  for further discussions related to the fractional quantum mechanics.

In \cite{BSS19}, the authors consider the operator
$$H_V=-\Delta_g+V$$
on a closed manifold $(M,g)$. An instance of the results in that paper is the following theorem.

\begin{theorem} Assume that $n\ge 4$ and $V\in L^{\frac{n}2}(M)$ and let
\begin{equation}\label{a.3}
\sigma(p)=\max\big( \, n(\tfrac12-\tfrac1p)-\tfrac12, \, \tfrac{n-1}2(\tfrac12-\tfrac1p)\, \big).
\end{equation}
Then for $\lambda\ge1$ we have
\begin{eqnarray}\label{a.4}
\|u\|_{L^p(M)}\le C_{p,V}\bigl(\, \lambda^{\sigma(p)-1}\bigl\| \bigr(-\Delta_g+V-(\lambda+i)^2\bigl)u\Big\|_{L^2(M)}
+\lambda^{\sigma(p)}\|u\|_{L^2(M)} \, \Big ),
\\ \text{if } \, \, \, u\in C^\infty(M),
\end{eqnarray}
provided that
\begin{equation}\label{a.5}
2<p<\tfrac{2n}{n-3}.
\end{equation}
The constant $C_{p,V}$ depends on $p$, $V$ and $(M,g)$ but not on $\lambda$.
\end{theorem}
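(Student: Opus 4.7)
The plan is to deduce the theorem from Sogge's classical $V{=}0$ quasimode bound together with a variable-coefficient uniform-Sobolev resolvent estimate that lets us absorb the $Vu$ contribution back into the left-hand side. Setting $R_\lambda = (-\Delta_g - (\lambda+i)^2)^{-1}$ and $f = (H_V - (\lambda+i)^2)u$, the identity
\[
u \;=\; R_\lambda f \;-\; R_\lambda(Vu)
\]
reduces matters to estimating the two pieces on the right in $L^p(M)$.

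For the first piece I would use Sogge's cluster bound $\|\Pi_\lambda h\|_{L^p(M)} \lesssim \lambda^{\sigma(p)}\|h\|_{L^2}$ in its resolvent form, namely $\|R_\lambda\|_{L^2 \to L^p} \lesssim \lambda^{\sigma(p)-1}$; the extra $\lambda^{-1}$ gain comes from $|\mathrm{Im}(\lambda+i)^2| \simeq \lambda$. This directly yields $\|R_\lambda f\|_{L^p} \lesssim \lambda^{\sigma(p)-1}\|f\|_{L^2}$, producing the first term of the stated bound. For the second piece I would invoke a critical-regime uniform-Sobolev estimate of the form $\|R_\lambda\|_{L^a \to L^p} = O(1)$ along the Sobolev line $\tfrac{1}{a} - \tfrac{1}{p} = \tfrac{2}{n}$ (such bounds on compact manifolds follow from the variable-coefficient Kenig--Ruiz--Sogge theory of Dos Santos Ferreira--Kenig--Salo and Bourgain--Shao--Sogge--Yao). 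Combining with H\"older's inequality $\|Vu\|_{L^a} \le \|V\|_{L^{n/2}} \|u\|_{L^p}$, justified precisely by $1/a = 1/p + 2/n$, gives
\[
\|R_\lambda(Vu)\|_{L^p} \lesssim \|V\|_{L^{n/2}}\, \|u\|_{L^p}.
\]

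To close the argument I would split $V = V_1 + V_2$ with $V_1 \in L^\infty(M)$ (say $V_1 = V\,\chi_{\{|V|\le N\}}$) and $\|V_2\|_{L^{n/2}} < \varepsilon$, where $\varepsilon$ is chosen small enough, depending only on the implicit constant above, that $C\varepsilon \le \tfrac12$; the $V_2$ piece then contributes $\tfrac12\|u\|_{L^p}$ which is absorbed on the left. The bounded piece is handled via the $L^2\to L^p$ resolvent bound: $\|R_\lambda(V_1 u)\|_{L^p} \lesssim \lambda^{\sigma(p)-1} \|V_1\|_{L^\infty}\|u\|_{L^2} \lesssim_V \lambda^{\sigma(p)-1}\|u\|_{L^2}$, which fits into the error term $\lambda^{\sigma(p)}\|u\|_{L^2}$ on the right (producing the $V$-dependent constant $C_{p,V}$).

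The principal obstacle is the variable-coefficient uniform-Sobolev bound $\|R_\lambda\|_{L^a \to L^p} = O(1)$ on a general compact manifold in the critical regime $|\mathrm{Im}\,z| \simeq |z|^{1/2}$: there is no exact scaling, the wave-kernel expansion away from the diagonal must be controlled via stationary phase along the geodesic flow, and as $p$ approaches the upper endpoint $2n/(n-3)$ the point $(1/a,1/p)$ drifts toward the boundary of the admissible Kenig--Ruiz--Sogge region, so one typically builds a reproducing operator out of several localized pieces (as in the wave-kernel program underlying the present paper) and interpolates with the cruder $L^2 \to L^p$ endpoint to cover the full open range $2 < p < 2n/(n-3)$.
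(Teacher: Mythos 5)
Your proposal and the proof in \cite{BSS19} (from which this theorem is quoted; the present paper does not reprove it) share the same resolvent identity $u = R_\lambda f - R_\lambda(Vu)$, the $L^2\to L^p$ resolvent bound of size $\lambda^{\sigma(p)-1}$ coming from Sogge's spectral cluster estimate, and a variable-coefficient Kenig--Ruiz--Sogge uniform Sobolev bound along the line $\tfrac1a - \tfrac1p = \tfrac2n$, so the skeleton is the same. The genuine divergence is in the absorption device. You split the \emph{potential} globally, writing $V = V_1 + V_2$ with $V_1$ bounded and $\|V_2\|_{L^{n/2}(M)}$ small, and apply a uniform Sobolev bound to the full resolvent $R_\lambda$ acting on $V_2u$. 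The paper's route (in both \cite{BSS19} and the fractional analogue of Section~4 here) instead splits the \emph{resolvent}: it writes $R_\lambda = T_{\lambda} + (\text{nonlocal error})$ with $T_\lambda$ the $\cos tP$ integral cut off to $|t|\le\delta$, hence supported in $\{d_g(x,y)\le\delta\}$ by finite propagation speed, proves the uniform Sobolev bound $\|T_\lambda\|_{L^a\to L^p}=O(1)$ only for this local piece, and then absorbs $Vu$ by tiling $M$ by $\delta$-balls $B_j$ and using that $\sup_j\|V\|_{L^{n/2}(B_j^*)}$ is made small by choosing $\delta$ small. Both absorption mechanisms close the argument, and both rest on the same $L^{n/2}$-continuity of $V$; the tradeoff is in what each requires. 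Your version needs the full uniform Sobolev estimate for the global resolvent $(-\Delta_g-(\lambda+i)^2)^{-1}$ on a compact manifold at off-diagonal Sobolev-line exponents — this is indeed available from Bourgain--Shao--Sogge--Yao and Shao--Yao for $z=(\lambda+i)^2$, but it is a heavy black box. The paper's $\delta$-localization deliberately avoids this, since the \emph{local} uniform Sobolev estimate is much easier to extract from the Hadamard parametrix and Carleson--Sj\"olin oscillatory-integral bounds (no global spectral-clustering obstructions), while the nonlocal piece is handled by the far cruder $L^2\to L^p$ projector estimate. This distinction also explains why the paper sets things up this way: once $-\Delta_g$ is replaced by $(-\Delta_g)^{\alpha/2}$, there is no tractable global parametrix for $((-\Delta_g)^{\alpha/2}-z)^{-1}$, and only the reassembled-from-local-wave-kernel formulation survives. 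So your argument correctly proves the quoted non-fractional theorem, but along a route that would not extend to the main results of the paper.
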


The integrability assumption $V \in L^{n/2}(M)$ reflects the criticality of the equation with respect to scaling. In lower dimensions, an additional assumption on the potential $V$, namely belonging to a Kato class, is required. Notice that the previous theorem can be proved using standard methods if the potential is assumed to be smooth.

The aim of the present paper is to contribute to the understanding of such eigenfunction estimates in the case of the spectral fractional Laplacian perturbed by a potential $V$. Notice that in the case $V \equiv 0$, by the spectral theorem, the eigenfunctions of $(-\Delta_g)^{\alpha/2}$ are the same as the ones of the Laplacian, and then the eigenfunction estimates can be found in the seminal paper \cite{sogge88}. Our approach in the current work builds on the one in \cite{BSS19} using uniform resolvent estimates. However, in the present setting, the situation is more involved since the expression of the resolvent of our operator is much more complicated  and requires deeper care. Indeed, it seems difficult to construct the parametrix for the ``non-local'' operator $(-\Delta_g)^{\alpha/2}-z$, and the standard parametrix construction in \cite{BSS19} or \cite{FKS} cannot work for these operators. To deal with this, we make use of an identity between the resolvent operators $((-\Delta_g)^{\alpha/2}-z)^{-1}$ and $(-\Delta_g-z)^{-1}$, and then exploit the wave kernel techniques recently developed by Bourgain-Shao-Sogge-Yao \cite{BSSY} and Shao-Yao \cite{sy}. Then we construct a new reproducing operator with several ``local'' operators and some good error terms. Moreover, we shall prove that these ``local'' operators satisfy certain variable coefficient versions of the ``uniform Sobolev estimates'' by Kenig-Ruiz-Sogge \cite{KRS}. These enable us to handle the critically singular potentials $V$ and prove the quasimode estimates.

We now state our main results. We first recall the suitable notion of Kato class in our setting.
\begin{definition}
  Let $n\ge2$ and $0<\alpha<2$. The potential $V$ is said to be in the Kato class $\mathcal{K}_\alpha(M)$ if
  \begin{equation}\label{kato}
    \lim_{r\downarrow 0}\sup_{x\in M}\int_{B_r(x)}d_g(x,y)^{\alpha-n}|V(y)|dy=0
  \end{equation}
  where $d_g(\cdot,\cdot)$ denotes geodesic distance and $B_r(x)$ is the geodesic ball of radius $r$ about $x$ and $dy$ denotes the volume element on $(M,g)$.
\end{definition}
Note that since $M$ is compact we automatically have that $V\in L^1(M)$ if $V\in \mathcal{K}_\alpha(M)$, and an easy argument shows that if $ V\in L^{\frac n\alpha+\varepsilon}(M), \hspace{1mm} \varepsilon>0$, then $V\in \mathcal{K}_\alpha(M)$.
\begin{theorem}\label{quasi1}
Assume that $n\ge 4$, $\frac{2n}{n+1}<\alpha<2$, $2\le p<\frac{2n}{n+1-2\alpha}$,  $V \in L^{\frac n\alpha}(M)$ and let
\begin{equation}\label{sigma}
\sigma(p)= \max\big(\, n(\tfrac 12-\tfrac1p)-\tfrac12, \tfrac{n-1}{2}(\tfrac12-\tfrac1p)\, \big).
\end{equation}
Then for $\lambda\ge 1$  we have
\begin{equation}\label{quasi}
\begin{aligned}
\|u\|_{L^{p}(M)}\le  C_V(\lambda^{1-\alpha+\sigma(p)}\|(\lapp+V-(\lambda+i)^\alpha)u\|_{L^2(M)} +\lambda^{\sigma(p)}\|u\|_{L^2(M)}) \qquad \\
{\rm if} \hspace{2mm} u\in C^\infty (M).
\end{aligned}
\end{equation}
The constant  $C_V$  depends on $p$, $V$, $\alpha$ and  $(M,g)$ but not on $\lambda$.
\end{theorem}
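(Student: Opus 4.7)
My plan is to adapt the strategy of \cite{BSS19} to the fractional setting, the main new difficulty being that the nonlocal operator $\lapp$ does not admit a usable local parametrix. Writing $P=\sqrt{-\lap}$ so that $\lapp=P^{\alpha}$ and $-\lap=P^{2}$ share the same eigenfunctions, the guiding identity is the spectral relation
\begin{equation*}
\bigl(P^{\alpha}-(\lambda+i)^{\alpha}\bigr)^{-1}=m_{\lambda}(P)\,\bigl(P^{2}-(\lambda+i)^{2}\bigr)^{-1},\qquad m_{\lambda}(\mu)=\frac{\mu^{2}-(\lambda+i)^{2}}{\mu^{\alpha}-(\lambda+i)^{\alpha}}.
\end{equation*}
On the spectral cluster $|P-\lambda|\lesssim 1$ a direct computation gives $|m_{\lambda}|\sim\lambda^{2-\alpha}$, which accounts precisely for the factor $\lambda^{1-\alpha+\sigma(p)}=\lambda^{2-\alpha}\cdot\lambda^{\sigma(p)-1}$ in \eqref{quasi} relative to the $\alpha=2$ bound of Theorem 1.

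For the free case $V\equiv 0$, a dyadic Littlewood--Paley decomposition of $u$ with respect to $P$ reduces the estimate to (a) Sogge's spectral cluster bound on the near cluster $|P-\lambda|\lesssim 1$, producing the $\lambda^{\sigma(p)}\|u\|_{L^{2}}$ term, and (b) dyadic bounds on the annuli $\{|P-\lambda|\sim 2^{j}\}$, where the multiplier $m_{\lambda}$ combined with the spectral cluster inequality supplies the $\lambda^{1-\alpha+\sigma(p)}\|(P^{\alpha}-(\lambda+i)^{\alpha})u\|_{L^{2}}$ contribution. The upper bound $p<\tfrac{2n}{n+1-2\alpha}$ is what keeps the high-frequency multiplier analysis under control.

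To treat the potential, I would construct a reproducing operator by expressing $(P^{2}-(\lambda+i)^{2})^{-1}$ through the cosine propagator. For $\rho\in\mathcal{S}(\mathbb{R})$ even with $\rho(0)=1$ and $\hat{\rho}$ supported in $[-T_{0},T_{0}]$ smaller than the injectivity radius, set schematically
\begin{equation*}
\mathcal{R}_{\lambda}F=m_{\lambda}(P)\int_{\mathbb{R}}\psi_{\lambda,T}(t)\,\cos(tP)\,F\,dt,
\end{equation*}
where $\psi_{\lambda,T}$ absorbs the truncation $\hat\rho(t/T)$ and the coefficients of the Fourier representation, so that the kernel of $\mathcal{R}_\lambda$ is supported in the diagonal neighborhood $\{d_g(x,y)\le T T_0\}$. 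Following the wave-kernel techniques of \cite{BSSY,sy}, a partition of unity on $M$ breaks $\mathcal{R}_\lambda$ into finitely many local pieces $\mathcal{R}_\lambda^{(j)}$, each a Fourier integral operator amenable to the Hadamard parametrix and stationary phase. The crucial step is to establish for these pieces a variable-coefficient Kenig--Ruiz--Sogge-type bound
\begin{equation*}
\|\mathcal{R}_{\lambda}^{(j)}F\|_{L^{p}(M)}\lesssim \lambda^{1-\alpha+\sigma(p)+\sigma(p')-1}\,\|F\|_{L^{p'}(M)}.
\end{equation*}

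With this in hand, writing $f=(\lapp+V-(\lambda+i)^{\alpha})u$ yields $u=\mathcal{R}_{\lambda}(f-Vu)+\mathcal{E}_{\lambda}u$, where $\mathcal{E}_{\lambda}u$ is an acceptable error bounded by $\lambda^{\sigma(p)}\|u\|_{L^{2}}$. H\"older's inequality furnishes $\|Vu\|_{L^{p'}}\le\|V\|_{L^{n/\alpha}}\|u\|_{L^{q}}$ with $1/q=1/p'-\alpha/n$; the condition $p<\tfrac{2n}{n+1-2\alpha}$ is designed to guarantee $q\le p$, so interpolation together with the splitting $V=V\cdot\1_{\{|V|\le N\}}+V\cdot\1_{\{|V|>N\}}$ (choosing $N$ so that $\|V\cdot\1_{\{|V|>N\}}\|_{L^{n/\alpha}}$ is small, using $V\in L^{n/\alpha}$) lets one absorb the $Vu$ term into $\|u\|_{L^{p}}$ on the left and conclude. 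The main obstacle is establishing the variable-coefficient Sobolev bound on $\mathcal{R}_{\lambda}^{(j)}$: the nonlocal multiplier $m_{\lambda}(P)$ does not commute with the localization inherent in $\cos(tP)$, so disentangling the two to obtain a KRS-type estimate uniform in $\lambda$ requires a wave-packet decomposition and careful stationary-phase analysis in the spirit of \cite{BSSY,sy}.
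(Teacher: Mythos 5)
Your guiding algebraic identity
$\bigl(P^{\alpha}-(\lambda+i)^{\alpha}\bigr)^{-1}=m_{\lambda}(P)\bigl(P^{2}-(\lambda+i)^{2}\bigr)^{-1}$
is correct as spectral calculus, and the heuristic $|m_{\lambda}|\sim\lambda^{2-\alpha}$ on the near–cluster correctly predicts the overall loss $\lambda^{1-\alpha+\sigma(p)}$. But the construction of the reproducing operator has a genuine gap that undermines the whole absorption scheme: you assert that the operator
$\mathcal{R}_{\lambda}=m_{\lambda}(P)\int\psi_{\lambda,T}(t)\cos(tP)\,dt$
has kernel supported in a small neighbourhood of the diagonal. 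This is false. The time-truncated half-wave piece $\int\psi_{\lambda,T}(t)\cos(tP)\,dt$ is indeed local by finite propagation speed, but precomposing with $m_\lambda(P)$ destroys this: $m_\lambda$ is a symbol of positive order $2-\alpha$, so $m_\lambda(P)$ is a genuinely nonlocal pseudodifferential operator (and in fact its symbol estimates degenerate near $\mu=\lambda$, with $\lambda$-dependent constants), and the composition is no longer supported near the diagonal. You flag exactly this as ``the main obstacle'' in your last sentence, but you do not resolve it — and without a local $T$-piece the crucial step of Section~4, namely covering $M$ by $\delta$-balls and using the smallness of $\|V\|_{L^{n/\alpha}(B(x,2\delta))}$ to absorb $\|Vu\|$, simply does not go through, because on each ball the operator still ``sees'' the whole manifold.

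The paper circumvents this by choosing a different starting identity: rather than pulling out the multiplier $m_\lambda(P)$, it uses the Balakrishnan/Martinez–Sanz formula (equation~(5.28) of \cite{MSbook}), which represents $((-\Delta_g)^{\alpha/2}-z)^{-1}$ as $\tfrac{z^{(2-\alpha)/\alpha}}{\alpha/2}(-\Delta_g-z^{2/\alpha})^{-1}$ plus a positive superposition $\int_0^\infty \gamma^{\alpha/2}(\gamma-\Delta_g)^{-1}\,d\gamma$ of genuine local resolvents against an explicit weight. Each Laplace resolvent $(\gamma-\Delta_g)^{-1}$ can then be split directly by a time cutoff $\rho(t/\delta)$ on $\cos(tP)$, producing local pieces $T_{\lambda,1}$ and $\tilde{T}_\lambda$ whose kernels are truly supported in $\{d_g(x,y)\le\delta\}$, plus error pieces $R_{\lambda,1}$, $\tilde{R}_\lambda$ controlled by the classical spectral cluster bounds. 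The uniform $L^{p^*}\to L^p$ Sobolev estimates (with $1/p^*-1/p=\alpha/n$, not the dual pair $(p',p)$ you write) are proved for these local pieces; note that these are genuinely uniform in $\lambda$, whereas the bound you propose has a residual $\lambda$-power. If you want to rescue your $m_\lambda(P)$ approach you would at minimum need to split $m_\lambda(P)$ itself into a properly supported piece plus a smoothing remainder with $\lambda$-uniform control, and quantify the $\lambda$-dependence in the symbol estimates; the paper's choice of identity sidesteps all of this.

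A secondary point: your H\"older step with $1/q=1/p'-\alpha/n$ and ``$q\le p$'' does not close the absorption loop for a range of $p$; the correct bookkeeping (as in the paper) is $\|Vu\|_{L^{p^*}(B_j^*)}\le \|V\|_{L^{n/\alpha}(B_j^*)}\|u\|_{L^p(B_j^*)}$ with $1/p^*=1/p+\alpha/n$, so that the same $L^p$-norm of $u$ reappears on the right and can be absorbed once the local $\|V\|_{L^{n/\alpha}}$ is made small by the choice of $\delta$.
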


\begin{theorem}\label{quasi2}
Assume  that $\frac{2n}{n+1}<\alpha<2$ and $V \in L^{\frac {n}{\alpha}}(M)\cap \mathcal{K}_\alpha(M)$.  Then if $n=2$ or $3$, $\sigma(p)$ as in \eqref{sigma} and $\lambda \ge 1$, we have for $2\le p\le \infty$
\begin{equation}\label{quassi}
\begin{aligned}
\|u\|_{L^{p}(M)}\leq  C_V\lambda^{1-\alpha+\sigma(p)}\|(\lapp+V-(\lambda+i)^\alpha)u\|_{L^2(M)}   \\
{\rm if} \hspace{2mm} u\in {\rm Dom}(H_V).
\end{aligned}
\end{equation}
If $n\ge4$, this inequality holds for all  $2\le p\le \frac{2n}{n-2\alpha}$, and we also have for $\frac{2n}{n-2\alpha}< p\le \infty$,

\begin{equation}\label{quasssi}
\begin{aligned}
\|u\|_{L^{p}(M)}\leq  C_V\Big(\lambda^{1-\alpha+\sigma(p)}\|(\lapp+V-(\lambda+i)^\alpha)u\|_{L^2(M)}\\
+\lambda^{-N+n(\frac12-\frac1p)} \|(I+H_V)^{N/\alpha}R_\lambda u\|_{L^2(M)} \Big) \\
{\rm if} \hspace{2mm} u\in {\rm Dom}(H_V).  \hspace{0.37in}
\end{aligned}
\end{equation}
Assuming that $N>n/2$ with $R_\lambda$  being the projection operator for $P_V=H_V^{1/\alpha}$ corresponding to the interval $[2\lambda, \infty)$.
\end{theorem}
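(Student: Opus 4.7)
My plan is to build on Theorem \ref{quasi1} together with a spectral decomposition of $u$ with respect to $P_V = H_V^{1/\alpha}$ and a perturbative resolvent argument tailored to the Kato class hypothesis. First I would write $u = u_L + u_H$ with $u_H := R_\lambda u$ and $u_L := (I - R_\lambda)u$, so that the two pieces are spectrally supported on $[0,2\lambda)$ and $[2\lambda,\infty)$ respectively with respect to $P_V$.

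The high-frequency piece $u_H$ is handled by the Sobolev embedding for $H_V$. Under $V \in \mathcal{K}_\alpha(M)\cap L^{n/\alpha}(M)$, the heat kernel of $H_V$ admits Gaussian-type upper bounds of the same order as that of $H_0 := (-\Delta_g)^{\alpha/2}$, which yields the embedding
\[
\|g\|_{L^p(M)} \lesssim \|(I+H_V)^{s/\alpha} g\|_{L^2(M)},\qquad s = n(\tfrac12 - \tfrac1p).
\]
Writing $(I+H_V)^{s/\alpha} u_H = (I+H_V)^{(s-N)/\alpha}(I+H_V)^{N/\alpha} u_H$ and bounding the first factor by $\lambda^{s-N}$ on the spectral range $P_V\ge 2\lambda$ produces the term $\lambda^{-N+n(1/2-1/p)}\|(I+H_V)^{N/\alpha}R_\lambda u\|_{L^2}$ of \eqref{quasssi}. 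When $p \le \frac{2n}{n-2\alpha}$ one may take $s \le \alpha$; then the same embedding together with the spectral bound $\|u_H\|_{L^2} \lesssim \lambda^{-\alpha}\|(H_V-(\lambda+i)^\alpha)u_H\|_{L^2}$ (valid because $|\mu^\alpha - (\lambda+i)^\alpha| \gtrsim \mu^\alpha$ for $\mu \ge 2\lambda$) absorbs $u_H$ into the first term on the right of \eqref{quassi}.

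For $u_L$, I would reduce to the free resolvent $G_\lambda := (H_0 - (\lambda+i)^\alpha)^{-1}$ via
\[
u_L = G_\lambda (I - R_\lambda) f - G_\lambda V u,\qquad f := (H_V-(\lambda+i)^\alpha)u.
\]
The unperturbed uniform bound $\|G_\lambda\|_{L^2 \to L^p} \lesssim \lambda^{1-\alpha+\sigma(p)}$---a variable-coefficient Kenig--Ruiz--Sogge estimate obtained from the new reproducing operator and the local uniform Sobolev estimates built in the body of the paper---then reduces matters to controlling $\|Vu\|_{L^2}$. For $n\ge 4$ and $p \le \frac{2n}{n-2\alpha}$, H\"older gives $\|Vu\|_{L^2} \le \|V\|_{L^{n/\alpha}}\|u\|_{L^{2n/(n-2\alpha)}}$, which can be iteratively absorbed, using the Kato class smallness \eqref{kato} on small geodesic balls to ensure that $G_\lambda V$ is a small perturbation of the identity in the relevant mapping.

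The hard part will be the absorption step in small dimensions $n=2,3$, where $\frac{2n}{n-2\alpha}$ may exceed $\infty$ and bare H\"older does not close the loop. There one instead localizes with a partition of unity adapted to geodesic balls of small radius and exploits \eqref{kato} ball by ball to trap $\|Vu\|_{L^2}$ by a small multiple of $\|u\|_{L^p}$ plus controlled remainders; this is precisely the role of the Kato class hypothesis. For $n\ge 4$ and $p > \frac{2n}{n-2\alpha}$, no Sobolev smoothing of order at most $\alpha$ can reach such $p$ from $L^2$-regularity alone, which is why the auxiliary $R_\lambda u$ term in \eqref{quasssi} is intrinsic rather than an artifact of the method and must be supplied by the high-frequency embedding described above.
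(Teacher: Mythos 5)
Your high-frequency argument (dyadic spectral decomposition in $H_V$, heat-kernel-based Sobolev embedding, and the factor $\lambda^{n(1/2-1/p)-N}$ from functional calculus) matches the paper's treatment of $R_\lambda u$ and is correct. The problems are in the low-frequency piece.

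First, the identity $u_L = G_\lambda(I-R_\lambda)f - G_\lambda V u$ is false as written: $R_\lambda$ is the spectral projection for $P_V = H_V^{1/\alpha}$, not for $H_0 = (-\Delta_g)^{\alpha/2}$, so $R_\lambda$ does \emph{not} commute with $G_\lambda = (H_0-(\lambda+i)^\alpha)^{-1}$, and $(H_0-z)(I-R_\lambda)u \ne (I-R_\lambda)(H_V-z)u - Vu$. Second, and more seriously, the absorption step fails on scaling grounds. Plugging the $L^2\to L^p$ resolvent bound $\|G_\lambda\|_{L^2\to L^p}\lesssim\lambda^{1-\alpha+\sigma(p)}$ into $\|Vu\|_{L^2}\le\|V\|_{L^{n/\alpha}}\|u\|_{L^{2n/(n-2\alpha)}}$ leaves a prefactor $\lambda^{1-\alpha+\sigma(p)}$, which at $p = \tfrac{2n}{n-2\alpha}$ equals $\lambda^{1/2}$ and grows without bound; no smallness of $\|V\|_{L^{n/\alpha}}$ on small balls can compensate a $\lambda$-dependent factor. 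The mechanism the paper actually uses is qualitatively different: the reproducing operator is split into four pieces, and the two \emph{local} pieces $T_{\lambda,1}$ and $\tilde T_\lambda$ are shown to satisfy $L^{p^*}\to L^p$ uniform Sobolev bounds with a constant $C_0$ independent of both $\lambda$ and $\delta$ (this is the whole point of the parametrix/kernel analysis in Section 3); one then localizes to $\delta$-balls using finite propagation speed and applies H\"older, obtaining $\|Vu\|_{L^{p^*}(B_j^*)}\le\|V\|_{L^{n/\alpha}(B_j^*)}\|u\|_{L^p(B_j^*)}$, so the absorption constant is exactly $C_M C_0\sup_j\|V\|_{L^{n/\alpha}(B_j^*)}$ with no $\lambda$-growth. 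Your appeal to $L^2\to L^p$ resolvent bounds cannot reproduce this. Relatedly, for $n=2,3$ the claim that the Kato condition alone lets you trap $\|Vu\|_{L^2}$ by a small multiple of $\|u\|_{L^p}$ is not what the Kato hypothesis gives: it controls integrals of $d_g(x,y)^{\alpha-n}|V(y)|$, i.e., the composition of $V$ with the resolvent kernel, not an $L^2$-norm of $Vu$. The paper instead uses explicit kernel bounds $|T_{\lambda,1}(x,y)|, |\tilde T_\lambda(x,y)|\lesssim d_g(x,y)^{\alpha-2}\mathbf{1}_{d_g\le\delta}$ to handle $p=\infty$ via the Kato integral, and separate $L^{2/\alpha}\to L^6$ operator bounds for $p=p_c$.

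Finally, your plan treats all $p$ via the split $u = u_L + u_H$, whereas the paper first proves \eqref{quassi} for $p_c\le p<\tfrac{2n}{n+1-2\alpha}$ \emph{without} any $R_\lambda$ split (dualizing the reproducing identity \eqref{repro3} and estimating six terms via Propositions \ref{p1}--\ref{p3}), and only then bootstraps to larger $p$ by combining the low-frequency heat-kernel smoothing $e^{-\lambda^{-\alpha}H_V}L_\lambda$ with the already-proved $p_c$ case. You would still need some anchor estimate at $p_c$ with \emph{no} $\lambda^{\sigma(p)}\|u\|_2$ error term; Theorem \ref{quasi1} does not provide it (it has that extra term and is restricted to $n\ge4$). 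Without establishing such an anchor, the low-frequency part of your argument has nothing to reduce to.
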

By modifying the examples in \cite[section 6]{BSS19}, it is not hard to see that when $V\equiv0$, \eqref{quassi} does not hold for $p>\frac{2n}{n-2\alpha}$ if $n\ge4$.  If $\chi_\lambda^V$ is the spectral projection operator associated with $P_V$ corresponding to the unit intervals $[\lambda,\lambda+1]$, then we have the following corollary.

\begin{corollary}\label{coro1}
  Let $n\ge2$, $\frac{2n}{n+1}<\alpha<2$, and $\sigma(p)$ as in \eqref{sigma}. If $V\in L^{n/\alpha}(M)\cap\mathcal{K}_\alpha(M)$, then
  \begin{equation}
    \|\chi_\lambda^Vf\|_{L^p(M)}\le C_V(1+\lambda)^{\sigma(p)}\|f\|_{L^2(M)},\ \ p\ge2,\ \lambda\ge0.
  \end{equation}
  Consequently, if $(\lapp+V)e_\lambda=\lambda^\alpha e_\lambda$ in the sense of distributions, we have
  \begin{equation}
    \|e_\lambda\|_{L^p(M)}\le C_V(1+\lambda)^{\sigma(p)}\|e_\lambda\|_{L^2(M)},\ \ p\ge2,\ \lambda\ge0.
  \end{equation}
\end{corollary}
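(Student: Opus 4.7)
My plan is to deduce both estimates from the quasimode bound of Theorem \ref{quasi2}, applied to $u = \chi_\lambda^V f$, by using the spectral theorem for $P_V = H_V^{1/\alpha}$ to control $\|(\lapp + V - (\lambda+i)^\alpha)u\|_{L^2}$ in terms of $\|f\|_{L^2}$.

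Set $u = \chi_\lambda^V f$, which belongs to $\mathrm{Dom}(H_V)$ since $\chi_\lambda^V$ projects onto the bounded spectral interval $[\lambda, \lambda+1]$ for $P_V$. Writing $H_V = P_V^\alpha$ and invoking the functional calculus,
\[
\|(\lapp + V - (\lambda+i)^\alpha)u\|_{L^2}^2 = \int_{[\lambda,\lambda+1]} |\mu^\alpha - (\lambda+i)^\alpha|^2\, d\|E_\mu f\|^2,
\]
where $E_\mu$ denotes the spectral resolution of $P_V$. For $\lambda \ge 1$ a Taylor expansion gives $(\lambda+i)^\alpha = \lambda^\alpha + i\alpha\lambda^{\alpha-1} + O(\lambda^{\alpha-2})$, and $|\mu^\alpha - \lambda^\alpha| \le C_\alpha \lambda^{\alpha-1}$ for $\mu \in [\lambda, \lambda+1]$, so
\[
\|(\lapp + V - (\lambda+i)^\alpha)u\|_{L^2} \le C\lambda^{\alpha-1}\|u\|_{L^2} \le C\lambda^{\alpha-1}\|f\|_{L^2},
\]
using the bound $\|\chi_\lambda^V\|_{L^2 \to L^2} \le 1$.

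Substituting this into \eqref{quassi} (valid for $n = 2, 3$ at all $p \ge 2$, and for $n \ge 4$ at $2 \le p \le \frac{2n}{n-2\alpha}$) yields
\[
\|\chi_\lambda^V f\|_{L^p} \le C_V\lambda^{1-\alpha+\sigma(p)} \cdot \lambda^{\alpha-1}\|f\|_{L^2} = C_V\lambda^{\sigma(p)}\|f\|_{L^2},
\]
as claimed. For the remaining range $n \ge 4$ and $\frac{2n}{n-2\alpha} < p \le \infty$, I would use \eqref{quasssi} together with the observation that $R_\lambda \chi_\lambda^V = 0$ whenever $\lambda > 1$, since the intervals $[\lambda, \lambda+1]$ and $[2\lambda, \infty)$ are then disjoint; the additional error term in \eqref{quasssi} thus vanishes, and the conclusion follows as before. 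The low-frequency regime $\lambda \in [0, 2]$ is absorbed into the $(1+\lambda)^{\sigma(p)}$ prefactor by a standard Sobolev-embedding argument for $(I+H_V)^s \chi_\lambda^V$ on the bounded range of $\lambda$.

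The eigenfunction estimate is then immediate: if $(\lapp + V)e_\lambda = \lambda^\alpha e_\lambda$ in the distributional sense, then $e_\lambda = \chi_\lambda^V e_\lambda$, and applying the first estimate with $f = e_\lambda$ gives the claim. I anticipate no serious obstacle beyond the spectral-calculus step that produces the $O(\lambda^{\alpha-1})\|u\|_{L^2}$ bound on the quasimode defect; once that is in place, the argument is a routine application of Theorem \ref{quasi2}, with the only mildly delicate points being the disjointness of the spectral intervals in the high-$p$ range and the handling of small $\lambda$.
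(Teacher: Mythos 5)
Your proof is correct and is the standard deduction of spectral cluster bounds from quasimode estimates, which is exactly what the paper intends (the paper does not spell out a proof of Corollary \ref{coro1}). The key calculation—that for $u = \chi_\lambda^V f$ the quasimode defect $\|(\lapp + V - (\lambda+i)^\alpha)u\|_{L^2}$ is $O(\lambda^{\alpha-1})\|f\|_{L^2}$, since $|\lambda_j - (\lambda+i)^\alpha| \lesssim \lambda^{\alpha-1}$ when $\lambda_j^{1/\alpha} \in [\lambda,\lambda+1]$ and $\lambda \ge 1$—is exactly right and cancels the $\lambda^{1-\alpha}$ factor in \eqref{quassi}. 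Your handling of the high-$p$ range for $n \ge 4$ via \eqref{quasssi} is also correct: since $\lambda + 1 < 2\lambda$ once $\lambda > 1$, the spectral supports of $\chi_\lambda^V$ and $R_\lambda$ are disjoint and the extra term vanishes; and the small-$\lambda$ regime is indeed absorbed by a heat-kernel/Sobolev argument using the bound \eqref{young}. The passage from the spectral cluster estimate to the eigenfunction estimate via $e_\lambda = \chi_\lambda^V e_\lambda$ is immediate, as you note.
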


\begin{remark}\label{rem1}{\rm
  The range of $\alpha$ in Theorem \ref{quasi2} can be improved if the potential $V$ has better regularity, by slightly modifying the proof. For example, if $V\equiv0$, then Theorem \ref{quasi2} holds for $\frac{n}{n+1}< \alpha<2$ if $n\ge4$, and $\frac{n}2<\alpha<2$ if $n=2,3$. Moreover, when $n=2,3$,  \eqref{quassi} also holds for $p\in[p_c,\frac{2n}{n-2\alpha}]$ if $\frac{n}{n+1}<\alpha< \frac{n}2$, and $p\in [p_c,\infty)$ if $\alpha=\frac{n}2$. Meanwhile, when $n=2,3$, \eqref{quasssi} also holds for $p\in(\frac{2n}{n-2\alpha}, \infty]$ if $\frac{n}{n+1}<\alpha< \frac{n}2$, and $p=\infty$ if $\alpha=\frac{n}2$. Clearly, Corollary \ref{coro1} is trivial if $V\equiv0$, since it holds for $0<\alpha<2$ by the definition of $(-\Delta_g)^{\alpha/2}$.

  If $V\in L^\infty(M)$, then Theorem \ref{quasi2} holds for $1\le \alpha<2$ if $n\ge4$, and  $\frac{n}2<\alpha<2$ if $n=2,3$. Furthermore, when $n=2,3$, \eqref{quassi} also holds for $p\in[p_c,\frac{2n}{n-2\alpha}]$ if $1\le\alpha< \frac{n}2$, and $p\in [p_c,\infty)$ if $\alpha=\frac{n}2$. Meanwhile, when $n=2,3$, \eqref{quasssi} also holds for $p\in(\frac{2n}{n-2\alpha}, \infty]$ if $\frac{n}{n+1}<\alpha< \frac{n}2$, and $p=\infty$ if $\alpha=\frac{n}2$. Consequently, Corollary \ref{coro1} holds for $1\le \alpha<2$ if $V\in L^\infty(M)$. }
\end{remark}

The paper is organized as follows. In the next section, we shall go over background concerning heat kernels and Kato class, and then introduce the construction of the reproducing operator that we shall use in proving our quasimode estimates. In Section 3, we shall establish some $L^q-L^p$ bounds for the operators coming from the wave kernel method. In Section 4, we shall prove Theorem \ref{quasi1}. In Section 5 and 6, we shall prove Theorem \ref{quasi2} for $n\ge3$ and $n=2$, respectively. In Section 7, as an application, we shall prove Strichartz estimates for wave operators involving potentials $V\in L^{n/\alpha}(M)\cap\mathcal{K}_\alpha(M)$.

Throughout this paper, $X\ls Y$ (or $X\gs Y$) means $X\le C_V Y$ (or $X\ge C_VY$) for some positive constant $C_V$ dependent on $V$. $X\approx Y$ means $X\ls Y$ and $X\gs Y$. In the following sections, the positive number $\delta$ depends on $V$ (see \eqref{d2}), hence the positive constant $C_\delta$ depends on $V$. Other positive constants like $C$, $C_0$, $C_1$, $C_2$, $C_M$ are independent of $V$. We just need to prove the theorems for sufficiently large $\lambda\ge C_\delta$, since relatively small $\lambda$ can be handled easily.

\section{Preliminaries results}

\subsection{Heat kernel estimates}
\begin{proposition}\label{bounds_heat_kernel}
Let $(M,g)$ be a closed Riemannian manifold of dimension $n\ge2$. Let $0<\alpha<2$ and $V\in \mathcal{K}_\alpha(M)$. Let $p^V(t,x,y)$ be the heat kernel of $H_V=(-\Delta_g)^{\alpha/2}+V$. Then for $0<t\le1$
\begin{equation}\label{heat1}
p^V(t,x,y)\approx  q_\alpha(t,x,y),\ \ x,y\in M
\end{equation}
where $q_\alpha(t,x,y)=\min\{t^{-n/\alpha}, td_g(x,y)^{-n-\alpha}\}$.
\end{proposition}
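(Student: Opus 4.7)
The plan is to prove the proposition in two steps: first handle the unperturbed case $V\equiv 0$ by Bochner subordination, then treat the potential as a perturbation using a Duhamel iteration combined with a Khasminskii-type bound. For the free kernel $p^0(t,x,y)$ of $H_0=(-\lap)^{\alpha/2}$, subordination gives
\[
p^0(t,x,y)=\int_0^\infty p_{\mathrm{heat}}(s,x,y)\,\eta_t^{\alpha/2}(s)\,ds,
\]
where $p_{\mathrm{heat}}$ is the heat kernel of $-\lap$ and $\eta_t^{\alpha/2}$ is the one-sided $(\alpha/2)$-stable subordinator density. Since $(M,g)$ is closed, Li--Yau type Gaussian bounds yield $p_{\mathrm{heat}}(s,x,y)\approx s^{-n/2}\exp(-d_g(x,y)^2/(cs))$ for $0<s\le 1$, while $p_{\mathrm{heat}}(s,x,y)\approx 1$ for $s\ge 1$. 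Substituting, splitting the $s$-integral at $s\sim d_g(x,y)^2$, and using $\eta_t^{\alpha/2}(s)\sim c\,t\,s^{-1-\alpha/2}$ as $s\to\infty$ together with the super-polynomial decay of $\eta_t^{\alpha/2}(s)$ as $s\downarrow 0$, a standard computation gives
\[
p^0(t,x,y)\approx \min\{t^{-n/\alpha},\,t\,d_g(x,y)^{-n-\alpha}\}=q_\alpha(t,x,y),\qquad 0<t\le 1.
\]

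For the perturbation, apply the Duhamel identity
\[
p^V(t,x,y)=p^0(t,x,y)-\int_0^t\!\!\int_M p^0(t-s,x,z)\,V(z)\,p^V(s,z,y)\,dz\,ds,
\]
and iterate it into a Neumann series whose $k$-th term is a $(k+1)$-fold convolution of $p^0$ separated by factors of $V$. A key preliminary observation is that the Kato condition \eqref{kato} is equivalent to the kernel-Kato condition
\[
\lim_{t\downarrow 0}\sup_{x\in M}\int_0^t\!\!\int_M q_\alpha(s,x,z)\,|V(z)|\,dz\,ds=0,
\]
which follows because $\int_0^1 q_\alpha(s,x,z)\,ds \approx d_g(x,z)^{\alpha-n}$ up to bounded factors on the compact manifold $M$. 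Splitting $V=V_+-V_-$ and applying Khasminskii's lemma to $|V|$, one concludes that for a sufficiently small $t_0>0$ the iterated convolutions are dominated by a geometric series, yielding $c\,q_\alpha(t,x,y)\le p^V(t,x,y)\le C\,q_\alpha(t,x,y)$ for $0<t\le t_0$. To extend to $0<t\le 1$, apply the Chapman--Kolmogorov identity $p^V(t,x,y)=\int_M p^V(t/2,x,z)\,p^V(t/2,z,y)\,dz$ together with compactness of $M$: for $t\ge t_0$ both $p^V$ and $q_\alpha$ are comparable to a positive constant, so the two-sided bound extends from the small-time regime.

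The main obstacle is the perturbation step. Specifically, one must (a) establish rigorously the equivalence of \eqref{kato} with the kernel Kato condition displayed above, using the explicit form of $q_\alpha$ and the fact that $d_g(x,z)$ is bounded by the diameter of $M$; and (b) since $V$ is sign-indefinite and the Neumann series alternates, the required upper and lower bounds must be obtained through a majorant argument with $|V|$, where only the smallness $\sup_x\int_0^{t_0}\!\!\int q_\alpha(s,x,z)\,|V(z)|\,dz\,ds\le 1/2$ provides geometric convergence of the dominating series. Once these technical points are in place, the claimed equivalence $p^V\approx q_\alpha$ follows.
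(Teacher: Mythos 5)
The paper itself does not carry out a proof of Proposition~\ref{bounds_heat_kernel}: it invokes the Euclidean results of Song and Chen--Kim--Song and asserts that the compact-manifold case ``can be proved along the same line of arguments.'' Your sketch follows exactly that line (subordination for the free kernel, then Duhamel iteration controlled by a Khasminskii bound), so there is no methodological divergence from what the paper intends. The subordination computation for $p^0$ on a closed manifold is fine, including the remark that the constant-mode tail of $p_{\mathrm{heat}}$ at large $s$ contributes $O(t)$, consistent with $q_\alpha$ at distances comparable to $\operatorname{diam}(M)$.

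There is, however, a concrete gap in the perturbation step. Khasminskii's lemma applied to $|V|$ gives smallness of quantities like $\sup_x \int_0^{t_0}\!\int_M q_\alpha(s,x,z)\,|V(z)|\,dz\,ds$, which controls the Neumann series \emph{in operator or sup norm}. It does not by itself give the \emph{pointwise} two-sided comparison $c\,q_\alpha(t,x,y)\le p^V(t,x,y)\le C\,q_\alpha(t,x,y)$, because each term of the series is a space-time convolution that you must show is small \emph{relative to} $q_\alpha(t,x,y)$ at every $(t,x,y)$, not merely bounded. The missing ingredient, used in every one of the references the paper cites, is the 3P inequality for the $\alpha$-stable profile:
\begin{equation*}
\frac{q_\alpha(t-s,x,z)\,q_\alpha(s,z,y)}{q_\alpha(t,x,y)}\ \lesssim\ q_\alpha(t-s,x,z)+q_\alpha(s,z,y),
\end{equation*}
valid precisely because $q_\alpha$ has polynomial tails (it fails for Gaussian kernels, which is why the second-order case with Kato potentials requires a different argument). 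Dividing the Duhamel term by $q_\alpha(t,x,y)$ and applying this inequality converts each iterate into an expression bounded by the Kato-type quantity above, which then yields geometric decay \emph{of the ratios} and hence the pointwise bounds, first the upper bound via majorization by $p^{-|V|}$ and then the lower bound via $p^{|V|}$. Without naming and using this 3P estimate the phrase ``dominated by a geometric series, yielding $c\,q_\alpha\le p^V\le C\,q_\alpha$'' is not justified; you should state the 3P inequality for $q_\alpha$ (and verify it on $M$ using compactness and the local Euclidean comparability of $d_g$) as the pivotal lemma of the perturbation step.
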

\begin{proof} In the Euclidean case, the claim has been proved in \cite{song2}, \cite{cks}. The manifold case can be proved along the same line of arguments used in \cite{song2} (cf. also  \cite{chensong}, \cite{cks}, \cite{bj07}, \cite{wz15}).
\end{proof}
The heat kernel estimates can be extended to all $t>0$ inductively by the semigroup property. Indeed, there are positive constants $\eps_1,\ \eps_2$ (dependent on $V$) such that for all $t>0$
\[e^{-\eps_1t} q_\alpha(t,x,y)\ls p^V(t,x,y)\ls e^{\eps_2t} q_\alpha(t,x,y),\ \ x,y\in M.\]In particular, when $V\equiv0$, the heat kernel estimate \eqref{heat1} holds for all $t>0$, which directly follows from subordination as in \cite[Theorem 4.2]{GG} (cf. also \cite{LY}, \cite{sturm}, \cite{sal}, \cite{BSS}). Moreover, $-H_V$ is the $L^2$-generator of the Schr\"odinger semigroup $e^{-tH_V}$ on $L^2(M)$ (see \cite[Corollary 4.9]{chensong}).

\subsection{Kato class and self adjointness}
We review that the assumption $V\in \mathcal{K}_\alpha(M)$ implies that the symmetric operators $H_V=(-\Delta_g)^{\alpha/2}+V$ are self-adjoint and bounded from below. The following is  the analog of  \cite[Proposition 2.1]{BSS19}.
\begin{proposition}\label{self}
If $V\in \mathcal{K}_\alpha(M)$ the quadratic form
\[q_V(u,v)=\int_MVu\bar{v}dx+\int\lapp u\bar{v}dx, \qquad u,v \in {\rm Dom}\big((\lapp+1)^{1/2}\big)\]
is bounded from below and defines a unique semi-bounded self-adjoint operator $H_V$ on $L^2$. Moreover, $C^\infty(M)$ constitutes a form core for $q_V$.
\end{proposition}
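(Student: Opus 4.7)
Proof proposal. The plan is to invoke the KLMN form-perturbation theorem, so everything reduces to showing that $V$ is infinitesimally form-bounded with respect to $\lapp$: for every $\eps>0$ there exists $C_\eps$ such that
\[
\int_M |V|\,|u|^2\,dx \;\le\; \eps\,\bigl\langle \lapp u,u\bigr\rangle_{L^2(M)} + C_\eps\|u\|_{L^2(M)}^2,\qquad u\in C^\infty(M).
\]
Once this is in place, KLMN furnishes a unique self-adjoint operator $H_V$, bounded below, whose closed quadratic form extends $q_V$. The density of $C^\infty(M)$ in ${\rm Dom}\bigl((\lapp)^{1/2}\bigr)=H^{\alpha/2}(M)$ in the graph norm, combined with the form bound, then automatically makes $C^\infty(M)$ a form core.

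I would establish the form bound by passing through the semigroup characterization of the Kato class. Using Proposition \ref{bounds_heat_kernel} with $V\equiv 0$, the free fractional heat kernel satisfies $p_\alpha^0(t,x,y)\ls \min\{t^{-n/\alpha},\,t\,d_g(x,y)^{-n-\alpha}\}$. A short time integration (splitting according to whether $t<d_g^\alpha$ or $t>d_g^\alpha$) yields
\[
\int_0^T p_\alpha^0(t,x,y)\,dt \;\ls\; \min\bigl\{\,d_g(x,y)^{\alpha-n},\; T^2 d_g(x,y)^{-n-\alpha}\,\bigr\}.
\]
Splitting $M$ into the three regions $\{d_g<T^{1/\alpha}\}$, $\{T^{1/\alpha}\le d_g<\delta\}$, $\{d_g\ge\delta\}$, integrating against $|V(y)|\,dy$, and applying (\ref{kato}) (for the first two regions, after first picking $\delta$ small so that the Kato supremum is below $\eps$) together with $V\in L^1(M)$ (automatic on compact $M$, for the tail), one concludes the semigroup smallness
\[
\lim_{T\downarrow 0}\sup_{x\in M}\int_0^T e^{-t\lapp}|V|(x)\,dt = 0.
\]

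From semigroup smallness I would deduce the infinitesimal form bound via a standard Khasminskii-type argument: representing the resolvent as $(\lapp+E)^{-1}=\int_0^\infty e^{-sE}e^{-s\lapp}\,ds$, and using positivity of the semigroup together with $V\ge 0$ (WLOG, by splitting $V=V_+-V_-$), one shows $\bigl\| |V|^{1/2}(\lapp+E)^{-1}|V|^{1/2}\bigr\|_{L^2\to L^2}\to 0$ as $E\to\infty$, which is equivalent to the operator-norm smallness of $|V|^{1/2}(\lapp+E)^{-1/2}$ and hence to the form bound displayed above.

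The main obstacle is the first step: transferring the localized integral condition (\ref{kato}) on a manifold into semigroup smallness, without an explicit parametrix for the non-local operator $\lapp$. This genuinely relies on the sharp two-sided heat-kernel estimates of Proposition \ref{bounds_heat_kernel}; once those are in hand, the remainder of the argument (Khasminskii plus KLMN) is classical and essentially identical to the $\alpha=2$ proof of \cite[Proposition 2.1]{BSS19}.
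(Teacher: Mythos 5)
Your proposal takes essentially the same route as the paper: reduce to an infinitesimal form bound for KLMN, derive it from the sharp two-sided heat kernel bound $p(t,x,y)\approx\min\{t^{-n/\alpha},td_g(x,y)^{-n-\alpha}\}$ together with the Kato condition, and then pass from the kernel smallness of the relevant resolvent to an $L^2$ operator-norm bound on $|V|^{1/2}(\lapp+E)^{-1}|V|^{1/2}$ and a $TT^*$ argument. The paper parametrizes the smallness with $\int_0^\infty e^{-Nt}p(t,x,y)\,dt$ as $N\to\infty$ rather than $\int_0^T$ as $T\downarrow 0$ and makes the kernel-to-operator step explicit via Schur's test rather than invoking a Khasminskii-type estimate, but these are cosmetic differences in an otherwise identical argument.
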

\begin{proof}
Since $\lapp $ is self adjoint, by the KLMN Theorem, it suffices to prove that for any $0<\epsilon<1$ there is a constant $C_\epsilon<\infty$ so that
\begin{equation}\label{below}
\int |V||u|^2dx\le \epsilon^2\|(-\Delta_g)^{\alpha/4} u\|_2^2+C_\epsilon\|u\|_2^2, \quad u\in {\rm Dom}\big((\lapp+1)^{1/2}\big)
\end{equation}
First, the heat kernel of $\lapp$ (e.g. \cite[Theorem 4.2]{GG}) satisfies
\[p(t,x,y)\approx q_\alpha(t,x,y),\ t>0,\ x,y,\in M\]
where $q_\alpha(t,x,y)=\min\{t^{-n/\alpha}, td_g(x,y)^{-n-\alpha}\}$. By the definition of $V\in \mathcal{K}_\alpha(M)$, it is not difficult to see
$$\sup\limits_{x\in M} \int_0^\infty\int_M e^{-Nt}p(t,x,y)|V(y)|dydt \rightarrow 0, \quad\text{as} \quad N\rightarrow \infty.
$$
Indeed, when $d_g(x,y)<N^{-\frac1{2\alpha}}$, \[\int_0^\infty e^{-Nt}q_\alpha(t,x,y)dt\ls d_g(x,y)^{\alpha-n},\]
and when $d_g(x,y)\ge  N^{-\frac{1}{2\alpha}}$,
\[\int_0^\infty e^{-Nt}q_\alpha(t,x,y)dt\ls N^{-2}d_g(x,y)^{-n-\alpha}+N^{\frac{n}\alpha-1}e^{-Nd_g(x,y)^\alpha}\ls N^{-1}d_g(x,y)^{\alpha-n}+N^{\frac{n}\alpha-1}e^{-\sqrt N}.\]
Similarly, we have
$$\sup\limits_{y\in M} \int_0^\infty\int_M e^{-Nt}p(t,x,y)|V(x)|dxdt \rightarrow 0, \quad\text{as} \quad N\rightarrow \infty
$$
Let $H_0=\lapp+1$. Choose $N=N_\epsilon$ so that the left hand side is $<\epsilon^2$, i.e,
$$\|(H_0+N_\epsilon)^{-1}|V|\|_{\infty}<\epsilon^2
$$
$$\||V|(H_0+N_\epsilon)^{-1}\|_{\infty}<\epsilon^2.
$$
If the operator $T:=|V|^{1/2}(H_0+N_\epsilon)^{-1}|V|^{1/2}$  has kernel $K(x,y)$, from the above two inequalities we know
$$\int_MK(x,y)|V|^{1/2}(y)dy \le \epsilon^2 |V|^{1/2}(x)
$$
and
$$\int_MK(x,y)|V|^{1/2}(x)dx \le \epsilon^2 |V|^{1/2}(y).
$$
So by Schur's test, we have
$$ \|T\|_{L^2\rightarrow L^2}= \||V|^{1/2}(H_0+N_\epsilon)^{-1}|V|^{1/2}\|_{L^2\rightarrow L^2} <\epsilon^2
$$
which, by a $TT^*$ argument, is equivalent to
\begin{equation}\label{self norm}
\||V|^{1/2}(H_0+N_\epsilon)^{-1/2}\|_{L^2\rightarrow L^2} <\epsilon
\end{equation}
This implies \eqref{below}.
\end{proof}

If $u \in \text{Dom}\big((\lapp+1)^{1/2}\big)$ then $\lapp u$ and $Vu$ are both distributions. If $H_V$ is the self-adjoint operator given by the proposition, then Dom$(H_V)$ is all such $u$ for which $\lapp u+Vu \in L^2$. At times, we abuse notation a bit by writting $H_V$ as $\lapp +V$.

Now if we take  $\epsilon^2=\frac12$ in \eqref{below}, we can get that for large $N$:
\begin{equation}\label{discrete}
\begin{aligned}
\|\big(\lapp+1\big)^{1/2}u\|_2^2=\int\big( \lapp+1\big)u\hspace{0.5mm}\bar{u}\hspace{0.5mm}dy&\le 2 \int\big( \lapp+V+N\big)u\hspace{0.5mm}\bar{u}\hspace{0.5mm}dy\\
&\le \|\sqrt{H_V+N}u\|_2^2
\end{aligned}
\end{equation}
Thus, $\big((-\Delta_g)^{\alpha/4}+1\big)(H_V+N)^{-1/2}$ is bounded on $L^2$. Since $\big((-\Delta_g)^{\alpha/4}+1\big)^{-1}$ is a compact operator, so must be $(H_V+N)^{-1/2}$. From this we conclude that the self-adjoint operator $H_V$ has $discrete \hspace{1mm}spectrum$.

\subsection{Construction of a reproducing operator}
We shall exploit the wave kernel techniques developed by Bourgain-Shao-Sogge-Yao \cite{BSSY} and Shao-Yao \cite{sy}. If $P=\sqrt{-\lap} $, we may split the resolvent operator for the Laplacian into local and nonlocal parts:
\begin{equation}\label{regular resolvent}
    \begin{aligned}
        (-\lap-(\lambda+i\mu )^2)^{-1}&=\frac{{\rm sgn} \hspace{1mm} \mu}{i(\lambda+i\mu)}\int_0^\infty\rho(t/\delta)e^{i({\rm sgn}\mu)\lambda t-|\mu|t}\cos tP dt\\
        &\quad+\frac{{\rm sgn} \hspace{1mm} \mu}{i(\lambda+i\mu)}\int_0^\infty(1-\rho(t/\delta))e^{i({\rm sgn}\mu)\lambda t-|\mu|t}\cos tP dt\\
        &:=T_{\lambda,\mu}+R_{\lambda,\mu}.
    \end{aligned}
    \nonumber
\end{equation}
Here $0<\delta<1$ is a fixed small number dependent on $V$, and will be determined later. And $\rho\in C_0^\infty(\mathbb{R})$ is an even function satisfying $$\rho(t)=1, |t|\leq 1/2, \hspace{2mm}{\rm and} \hspace{2mm}\rho(t)=0, |t|\geq 1.
$$
Because $\cos tP$ has finite propagation speed, the kernel of the operator $T_{\lambda,\mu}$ satisfies \[T_{\lambda,\mu}(x,y)=0, \ \ {\rm if}\ \ d_g(x,y)>\delta.\] So $T_{\lambda,\mu}$ is a ``local operator'' which will allow us to deal with critically singular potentials $V$ later. Next, if we use an identity between the resolvent operators of Laplacian and fractional Laplacian (see e.g. \cite{MSbook}, page 118, (5.28)), then for fixed $z=(\lambda+i)^{\al}$ we have
\begin{equation}
    \begin{aligned}
        ((-\lap)^{\alpha/2}-z)^{-1}&=\frac{z^{\frac{2-\alpha}{\alpha}}}{\alpha/2}(-\lap-z^{2/\alpha})^{-1}+\frac{\sin(\pi \alpha/2)}{\pi}\int_0^\infty\frac{\gamma^{\alpha/2}(\gamma-\lap)^{-1}}{\gamma^\alpha-2z\gamma^{\alpha/2}\cos(\pi \alpha/2)+z^2}d\gamma \\
        &:=\tfrac{(\lambda+i)^{(2-\alpha)}}{\alpha/2}T_{\lambda,1}+\tfrac{(\lambda+i)^{(2-\alpha)}}{\alpha/2}R_{\lambda,1} + \tilde{T}_{\lambda}+\tilde{R}_{\lambda}
    \end{aligned}
\end{equation}
where
\begin{equation}\label{Ttilde}
\tilde{T}_\lambda=\frac{\sin(\pi \alpha/2)}{-\pi\sqrt{\gamma}}\int_0^\infty\int_0^\infty \frac{\gamma^{\alpha/2}\rho(t/\delta)e^{-\sqrt{\gamma}t}\cos tP}{\gamma^\alpha-2z\gamma^{\alpha/2}\cos(\pi \alpha/2)+z^2}d\gamma dt
\end{equation}
\begin{equation}\label{Rtilde}
\tilde{R}_\lambda=\frac{\sin(\pi \alpha/2)}{-\pi\sqrt{\gamma}}\int_0^\infty\int_0^\infty \frac{\gamma^{\alpha/2}(1-\rho(t/\delta))e^{-\sqrt{\gamma}t}\cos tP}{\gamma^\alpha-2z\gamma^{\alpha/2}\cos(\pi \alpha/2)+z^2}d\gamma dt.
\end{equation}
So we can write the identity as
\begin{equation}\label{b2}
    \begin{aligned}
        I&=((-\lap)^{\alpha/2}-z)^{-1}((-\lap)^{\alpha/2}-z) \\
        &=\Big(\tfrac{(\lambda+i)^{(2-\alpha)}}{\alpha/2}T_{\lambda,1}+ \tilde{T}_\lambda\Big)((-\lap)^{\alpha/2}-z)+\Big(\tfrac{(\lambda+i)^{(2-\alpha)}}{\alpha/2}R_{\lambda,1} +\tilde{R}_\lambda\Big)((-\lap)^{\alpha/2}-z).
    \end{aligned}
\end{equation}
Consequently, we may reproduce any function $u\in C^\infty(M)$ in the following way:
\begin{equation}\label{b3}
\begin{aligned}
   u(x)=&\Big(\tfrac{(\lambda+i)^{(2-\alpha)}}{\alpha/2}T_{\lambda,1}+ \tilde{T}_\lambda\Big)\Big((-\lap)^{\alpha/2}-z+V\Big)u-\Big(\tfrac{(\lambda+i)^{(2-\alpha)}}{\alpha/2}T_{\lambda,1}+ \tilde{T}_\lambda\Big)(Vu) \\
   &+\Big(\tfrac{(\lambda+i)^{(2-\alpha)}}{\alpha/2}R_{\lambda,1} +\tilde{R}_\lambda\Big)((-\lap)^{\alpha/2}-z)u.
   \end{aligned}
\end{equation}
Throughout the paper, we fix $z=(\lambda+i)^\alpha$. In the next section, we shall study these operators in detail.
\section{$L^q-L^p $ norms of the operators}
In this section, we will prove several estimates about the $L^q-L^p$ norms of the operators $T_{\lambda,1}$, $\tilde T_\lambda$, $R_{\lambda,1}$, $\tilde R_\lambda$ defined in the previous section.

\begin{proposition}\label{p1}
   Let $n\ge2$ and $0<\alpha<2$. If \hspace{1mm}$T_{\lambda,1}$  is defined as in \eqref{regular resolvent}, $p_c=\frac{2(n+1)}{n-1}$, then for $\lambda\ge1$
  \begin{equation}\label{c1}
     \Big\| \tfrac{(\lambda+i)^{(2-\alpha)}}{\alpha/2}T_{\lambda,1} f\Big\|_{L^{p}(M)}\leq C_{\delta} \lambda^{1-\al+\sigma(p)}\|f\|_{L^{2}(M)}
  \end{equation}
provided that $p\in[p_c,\infty]$ if $n=2$ or $3$, $p\in[p_c,\infty)$ if $n=4$ and $p\in[p_c,\frac{2n}{n-4}]$ for $n\ge 5$,
  where $C_\delta$ is a constant dependent on $\delta$.
  Moreover, if $n\ge3$, $\frac{2n}{n+1}<\al<2$, $\frac{1}{p^*}-\frac{1}{p}=\frac{\alpha}{n}$, then for $p_c\le p < \frac{2n}{n+1-2\alpha}$, $\lambda\ge1$
   \begin{equation}\label{c2}
     \Big\|\tfrac{(\lambda+i)^{(2-\alpha)}}{\alpha/2}T_{\lambda,1} f\Big\|_{L^{p}(M)}\leq C_1 \|f\|_{L^{p^*}(M)}
  \end{equation}
  where $C_1$ is a constant independent of $\delta$. If $n=2$, then this inequality holds for $6\le p<\frac{4}{3-2\alpha}$ when $\frac43<\alpha<\frac32$, and for $6\le p<\infty$ when $\frac32\le \alpha<\frac53$.
\end{proposition}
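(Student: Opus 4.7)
The plan is to treat the two estimates by distinct methods, exploiting the spectral-multiplier form of $T_{\lambda,1}$ for \eqref{c1} and its local (finite propagation) character for \eqref{c2}.

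For \eqref{c1}, I would proceed by spectral decomposition. Writing $P=\sqrt{-\Delta_g}$, we have $T_{\lambda,1}=m_\lambda(P)$ with
$$m_\lambda(\tau)=\frac{1}{i(\lambda+i)}\int_0^\infty \rho(t/\delta)\,e^{i\lambda t-t}\cos(t\tau)\,dt.$$
Splitting $\cos(t\tau)=\tfrac12(e^{it\tau}+e^{-it\tau})$ and integrating by parts repeatedly in $t$, exploiting the smoothness of $\rho(\cdot/\delta)e^{-t}$ and its support in $[0,\delta]$, I obtain
$$|m_\lambda(\tau)|\le\frac{C_{\delta,N}}{\lambda\,(1+|\lambda-\tau|)^{N}}\qquad\text{for every }N\ge 1.$$
Letting $\chi_k$ denote the spectral projector of $P$ onto $[k,k+1)$, Sogge's spectral cluster theorem gives $\|\chi_k f\|_{L^p(M)}\lesssim(1+k)^{\sigma(p)}\|\chi_k f\|_{L^2(M)}$ in the indicated range of $p$. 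Bounding $\|T_{\lambda,1}f\|_{L^p}$ by $\sum_k\sup_{[k,k+1)}|m_\lambda|\cdot\|\chi_k f\|_{L^p}$ and applying Cauchy--Schwarz in $k$,
$$\|T_{\lambda,1}f\|_{L^p}\lesssim\Big(\sum_k\frac{C_{\delta,N}^2\,(1+k)^{2\sigma(p)}}{\lambda^2\,(1+|\lambda-k|)^{2N}}\Big)^{1/2}\|f\|_{L^2}\lesssim C_\delta\,\lambda^{\sigma(p)-1}\|f\|_{L^2},$$
where the dominant contribution comes from $k\sim\lambda$ and the tails are tamed by taking $N$ large. Multiplication by $(\lambda+i)^{2-\alpha}\approx\lambda^{2-\alpha}$ yields \eqref{c1}.

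For \eqref{c2} the decisive feature is that $T_{\lambda,1}$ is a \emph{local} operator: because $\rho(t/\delta)$ is supported in $\{|t|\le\delta\}$ and $\cos(tP)$ has finite propagation speed, the Schwartz kernel of $T_{\lambda,1}$ vanishes for $d_g(x,y)>\delta$. I would fix $\delta$ below the injectivity radius of $(M,g)$, insert a partition of unity, and reduce to analysis in a single geodesic normal coordinate chart. In that chart I would use the Hadamard parametrix for $\cos(tP)$ truncated to $|t|\le\delta$ to realize $(\lambda+i)^{2-\alpha}T_{\lambda,1}$ as a principal term modeled on the Euclidean operator $(\lambda+i)^{2-\alpha}(-\Delta-(\lambda+i)^2)^{-1}$ on $\mathbb{R}^n$, plus smoother remainders. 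By scaling and the Euclidean uniform Sobolev bound of Kenig--Ruiz--Sogge one has
$$\|(-\Delta-(\lambda+i)^2)^{-1}\|_{L^{p^*}(\mathbb{R}^n)\to L^p(\mathbb{R}^n)}\lesssim\lambda^{n(\frac{1}{p^*}-\frac{1}{p})-2}=\lambda^{\alpha-2}$$
whenever $\frac{1}{p^*}-\frac{1}{p}=\frac{\alpha}{n}$ and $p\in[p_c,\frac{2n}{n+1-2\alpha})$, and the $\lambda^{2-\alpha}$ prefactor exactly cancels $\lambda^{\alpha-2}$, producing the desired $O(1)$ bound. The parametrix remainders are of lower order in $\lambda$ and contribute admissible terms with constants independent of $\delta$.

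The main obstacle is \eqref{c2}: one must establish a variable-coefficient analog of the Kenig--Ruiz--Sogge bound with a constant that is \emph{uniform in $\delta$} across the full range $p\in[p_c,\frac{2n}{n+1-2\alpha})$, which demands careful uniform control of the Hadamard remainder. Uniformity in $\delta$ is crucial because $\delta$ will eventually be chosen small depending on the Kato-class potential $V$, and any $\delta$-growth would destroy the later applications. The case $n=2$ requires a separate treatment (and a slightly narrower range of $\alpha$), reflected in the statement, owing to the logarithmic behaviour of the Green's function of $-\Delta-\lambda^2$ in dimension two and the different shape of the spectral cluster bound at $p=\infty$.
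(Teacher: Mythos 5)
Your argument for \eqref{c1} has a fatal error at the start: you claim that integration by parts gives
\[
|m_\lambda(\tau)| \le \frac{C_{\delta,N}}{\lambda\,(1+|\lambda-\tau|)^N}\quad\text{for all }N,
\]
for the multiplier of the \emph{local} operator $T_{\lambda,1}$. This is false. In the integral
$\int_0^\infty \rho(t/\delta)e^{i\lambda t - t}\cos(t\tau)\,dt$
the amplitude $\rho(t/\delta)e^{-t}$ equals $1$ at $t=0$, so each integration by parts produces a nonzero boundary term at $t=0$. One therefore gets an asymptotic expansion in inverse powers of $(\lambda-\tau)$ rather than rapid decay; concretely $m_\lambda(\tau)\approx(\tau^2-(\lambda+i)^2)^{-1}$ up to lower-order corrections, which decays only like $\lambda^{-1}|\lambda-\tau|^{-1}$ near $\tau\sim\lambda$ and like $\tau^{-2}$ for $\tau\gg\lambda$. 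The rapid decay you invoke is exactly the feature of $R_{\lambda,1}$, whose cutoff $1-\rho(t/\delta)$ vanishes to \emph{all} orders near $t=0$; you have swapped the roles of the local and nonlocal pieces. With only $\tau^{-2}$ decay, your term-by-term spectral-cluster summation loses a logarithm near $\tau\sim\lambda$ and diverges in the tails already for $n\ge 5$ at $p=\tfrac{2n}{n-4}$, where $\sigma(p)=\tfrac32>1$. The paper's proof avoids this by writing $T_{\lambda,1}=(-\Delta_g-(\lambda+i)^2)^{-1}-R_{\lambda,1}$, applying Sogge's known resolvent bound (which is \emph{not} a consequence of naive spectral-cluster summation, as it is sharp in dimensions $n\ge 5$) to the first term, and the rapid-decay Schwartz argument only to $R_{\lambda,1}$. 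Without that split, \eqref{c1} does not follow.

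For \eqref{c2} your outline is aimed in roughly the same direction as the paper (locality from finite propagation speed, Hadamard parametrix in a chart, a Kenig--Ruiz--Sogge-type estimate with $\delta$-independent constant), but it is too coarse to carry the burden of the stated range of exponents. The paper does not simply ``model on the Euclidean resolvent''; it dyadically decomposes $T_{\lambda,1}=\sum_{j=0}^{j_0}S_j$ in the time variable, treats $S_0$ as a pseudodifferential operator of order $-\alpha$ with a $d_g(x,y)^{\alpha-n}$ kernel and Hardy--Littlewood--Sobolev, splits each $S_j$ into a near-diagonal term controlled by Young's inequality and an oscillatory term $T_j$ controlled by a Carleson--Sj\"olin estimate, and finally interpolates and sums a geometric series whose convergence requires $\tfrac1q>\tfrac{n+1}{2n}$. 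That summability condition, together with the condition under which the fractional Sobolev line $\tfrac1{p^*}-\tfrac1p=\tfrac\alpha n$ meets the admissible segment, is precisely what produces the range $p\in[p_c,\tfrac{2n}{n+1-2\alpha})$ and the restriction $\alpha>\tfrac{2n}{n+1}$. Your proposal asserts the uniform Sobolev bound for the whole range without performing this decomposition or verifying where the pair $(p^*,p)$ sits relative to the KRS admissible region, so the range of $p$ is not actually established.
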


\begin{proof}{}
To prove \eqref{c1}, we write
\begin{equation}\label{decomp} \tfrac{(\lambda+i)^{(2-\alpha)}}{\alpha/2}T_{\lambda,1} f=\tfrac{(\lambda+i)^{(2-\alpha)}}{\alpha/2}((-\lap-(\lambda+i)^{2})^{-1}-R_{\lambda,1})f
\end{equation} By using \cite{sogge88} or \cite[Proposition 2.4]{BSS19}, we obtain
 \[\|(-\lap-(\lambda+i)^{2})^{-1}f\|_{L^{p}(M)}\le C_\delta \lambda^{\sigma(p)-1}\|f\|_{L^{2}(M)}\]
provided that $p\in[p_c,\infty]$ if $n=2$ or 3, $p\in[p_c,\infty)$ if $n=4$ and $p\in[p_c,\frac{2n}{n-4}]$ for $n\ge 5$.

Since the multiplier associated to the  operator $R_{\lambda,1}(P)$ is
$$R_{\lambda,1}(\tau)=\frac{1}{i(\lambda+i)}\int_0^\infty(1-\rho(t/\delta))e^{i\lambda t-t}\cos t\tau dt,
$$
integration by part in the $t$ variable gives us the bound
\begin{equation}\label{mul}
R_{\lambda,1}(\tau)\leq C_{N,\delta}\lambda^{-1}\Big( (1+|\lambda-\tau|)^{-N}+(1+|\lambda+\tau|)^{-N} \Big).
\end{equation}
Let $\chi_\lambda=\sum\limits_{\lambda_j\in[\lambda-1,\lambda)} E_jf$ be the spectral projection operator associated with $P=\sqrt{-\Delta_g}$ corresponding to the unit interval $[\lambda-1,\lambda]$. By the classical spectral projection bounds in \cite{sogge88}, we get
\begin{align*}
    \|R_{\lambda,1} f\|_{L^{p}(M)}&\leq \sum\limits_{k=1}^\infty  \|\chi_kR_{\lambda,1} f\|_{L^{p}(M)} \\
    &\leq \sum\limits_{k=1}^\infty k^{\sigma(p)}\big(\sup_{\tau\in[k-1,k)}R_{\lambda,1}(\tau)\big)\|\chi_k f\|_{L^{2}(M)}\\
    &\leq C_{N,\delta}\sum\limits_{k=1}^\infty k^{\sigma(p)}\lambda^{-1}\Big( (1+|k-\lambda|)^{-N}+(1+|k+\lambda|)^{-N} \Big)\|f\|_{L^{2}(M)}\\
    &\leq C_{\delta}\lambda^{\sigma(p)-1}\|f\|_{L^{2}(M)}.
\end{align*}
Then \eqref{c1} follows by Minkowski inequality.

To prove \eqref{c2}, we will decompose the operator $T_{\lambda,1}$ in the following way. Fix a function $\beta\in C_0^\infty(\mathbb{R})$ satisfying
\begin{equation}
    \beta(t)=0,\hspace{1mm} t\notin [1/2,2],\hspace{1mm}  |\beta(t)|\leq 1,\hspace{1mm}  \sum\limits_{j=-\infty}\limits^{\infty}\beta(2^{-j}t)=1,\hspace{1mm}  t>0.
\end{equation}
Similar to \cite{BSSY}, we define operators
\begin{equation}
    S_jf=\frac{1}{i(\lambda+i)}\int_0^\infty\beta(\lambda2^{-j}t)\rho(t/\delta)e^{i\lambda t-t}\cos tP fdt,
\end{equation}
and
\begin{equation}
    S_0f=\frac{1}{i(\lambda+i)}\int_0^\infty\tilde{\rho}(\lambda t)\rho(t/\delta)e^{i\lambda t-t}\cos tP fdt,
\end{equation}
with
$$ \tilde{\rho}(t)=\big(1-\sum\limits_{j=0}\limits^{\infty}\beta(2^{-j}t)\big)\in C_0^\infty(\mathbb{R}).
$$
Clearly $\tilde{\rho}(t)=0$ if $t>2$ and  $\tilde{\rho}(t)=1$ if $0<t<1$.

By definition and the support property, we have
\begin{equation}\label{daa}
T_{\lambda,1} f=\sum\limits_{j=0}^{j_0} S_jf
\end{equation}
where $\lambda^{-1}2^{j_0}\approx \delta$.
Since the multiplier associated to the operator $S_0$ is
\begin{equation}
    S_0(\tau)=\frac{1}{i(\lambda+i)}\int_0^\infty\tilde{\rho}(\lambda t)\rho(t/\delta)e^{i\lambda t-t}\cos t\tau dt
\end{equation}
 it is not hard to prove that it satisfies for $j=0,1,2,...$
\begin{equation}\label{8.4}
|\tfrac{d^j}{d\tau^j}S_0(\tau)|\le \begin{cases}
  C_j\lambda^{-2-j}\ \ \ \ \ \ {\rm if}\ \ |\tau|\le\lambda,\\
  C_j|\tau|^{-2-j}\ \ \ \ \ {\rm if}\ \ |\tau|>\lambda.
\end{cases}
\end{equation}
by the $O(\lambda^{-1})$ smallness of the time support and integration by parts (see e.g. \cite[Lemma 2.2]{sy}).
So the multiplier associated to the operator $\frac{(\lambda+i)^{(2-\alpha)}}{\alpha/2}S_0$  satisfies $$ |\tfrac{(\lambda+i)^{(2-\alpha)}}{\alpha/2}\tfrac{d^j}{d\tau^j}S_0(\tau)|\le C_j (1+|\tau|)^{-\alpha-j},\ \ j=0,1,2,..$$ Thus $\frac{(\lambda+i)^{(2-\alpha)}}{\alpha/2}S_0$ is a pseudo-differential operator of order $-\alpha$ (see e.g. \cite[Theorem 4.3.1]{fio}). Then this leads  to the following kernel estimate (see e.g. Proposition 1 on the page 241 of \cite{stein})
\begin{equation}\label{S0kernel}
  |\tfrac{(\lambda+i)^{(2-\alpha)}}{\alpha/2}S_0(x,y)| \le C d_g(x,y)^{\alpha-n}.
\end{equation}

By the finite propagation speed property, $S_0(x,y)$ is supported in $\{d_g(x,y)\le 2\lambda^{-1}\}$. By our assumptions, $1-(\frac1{p^*}-\frac1{p})=\frac{n-\alpha}{n}$, $1<p^*<p<\infty$, then by Hardy-Littlewood-Sobolev inequality
\begin{equation}
     \|  \tfrac{(\lambda+i)^{(2-\alpha)}}{\alpha/2}S_0f\|_{L^{p}(M)}\leq C \|f\|_{L^{p^*}(M)}.
  \end{equation}

To deal with the operators $S_j$, we proceed as in \cite{sy}. Note that their argument is only used in dimension $n\ge3$ to prove uniform estimates for a local operator, but their kernel estimates can be directly extended to dimension $n=2$, since these estimates are obtained from the Hadamard parametrix and integration by parts.

First, as in \cite[(2.18)]{sy}, by using the Hadamard parametrix for the operator $\cos tP$ and a scaling argument, the kernel $S_j(x,y)$ is the sum of the following two terms
$$ S_j^{\pm}(x,y)=\frac{\varepsilon^{1-n}}{i(\lambda+i)}\int_0^\infty\int_{\mathbb{R}^n}\beta(t)\rho(\varepsilon\delta^{-1}t)e^{-\varepsilon t}\alpha_{\pm}(\varepsilon t,x,y,|\xi|/\varepsilon)e^{i\frac{\kappa(x,y)}{\varepsilon}\cdot\xi\pm t|\xi|}d\xi dt
$$
where $\alpha_{\pm}(t,x,y,|\xi|)$ are 0-order symbol functions in $\xi$, $\varepsilon=\frac{2^j}{\lambda}$ is a dyadic number between $\lambda^{-1}$ and $\delta$, and
$ S_j^{\pm}(x,y)=0$ if $d_g(x,y)\ge 4\varepsilon$.

If $d_g(x,y)/ \varepsilon\le 1/4$, an integration by parts argument with respect to $\xi$ would show that
$$ |S_j^\pm(x,y)|\le C\eps ^{1-n}\lambda^{-1}.
$$
Since $1-\alpha<0$, by using Young's inequality and summing over a geometric series, we have in this case
$$  \|  \tfrac{(\lambda+i)^{(2-\alpha)}}{\alpha/2}\sum\limits_{j=0}^{j_0}S_j^\pm f\|_{L^{p}(M)}\le C\sum_{j=0}^{j_0}2^{(1-\alpha)j}\|f\|_{L^{p^*}(M)}\le C \|f\|_{L^{p^*}(M)}.
$$
Next, we are reduced to considering the operator $K_j^\pm$ with kernel $K_j^\pm(x,y):=\beta(\frac{d_g(x,y)}{2\varepsilon})S_j^\pm(x,y)$ in which $\beta(r)$ is supported when $r\in(1/2,2)$. By calculating the kernel explicitly as in \cite[(2.21), (2.23)]{sy}, we can write the kernel $K_j^\pm(x,y)$ as
\begin{equation}\label{Kjkernel}
  K_j^\pm(x,y)=2^{-j}\varepsilon^{2-n}a_{1,w}(x,y)+T_j(x,y),
\end{equation}
with  \begin{equation}\label{Tjkernel}T_j(x,y)=\lambda^{\frac{n-3}2}\eps^{-\frac{n-1}2}e^{i\lambda d_g(x,y)}a_\eps(x,y),\end{equation}
where $a_{1,w}(x,y)$ is a uniformly bounded smooth function with support when $d_g(x,y)/\varepsilon\in(1/4,4)$, and the smooth function $a_\eps(x,y)$ is supported when $d_g(x,y)/\eps\in(1/4,4)$, and
 \[|\partial_{x,y}^\gamma a_\eps(x,y)|\le C_\gamma \eps^{-|\gamma|}.\]Since $2^{-j}\varepsilon^{2-n}= \varepsilon ^{1-n}\lambda^{-1}$, we can deal with the first term of \eqref{Kjkernel} exactly the same as above by Young's inequality. As in \cite[(2.27)]{sy}, by using the standard Carleson-Sj\"olin estimate in \cite[Theorem 2.2.1]{fio}, the operator $T_j$ associated with the kernel \eqref{Tjkernel} satisfies the following bound
\begin{equation}\label{c3}
    \|T_j f\|_{L^{p}(M)}\leq C \lambda^{-2+n(\frac1q-\frac1p)}2^{j(\frac{n+1}{2}-\frac n{q})}\|f\|_{L^{q}(M)}
\end{equation}
given that $(\frac1q,\frac1p)$ is on the line segment $\{\frac1p=\frac{n-1}{n+1}(1-\frac1q),\ \frac12\leq \frac1q\leq1\}$. See Figure \ref{fig1}.
\begin{figure}
   \includegraphics[width=0.8\textwidth]{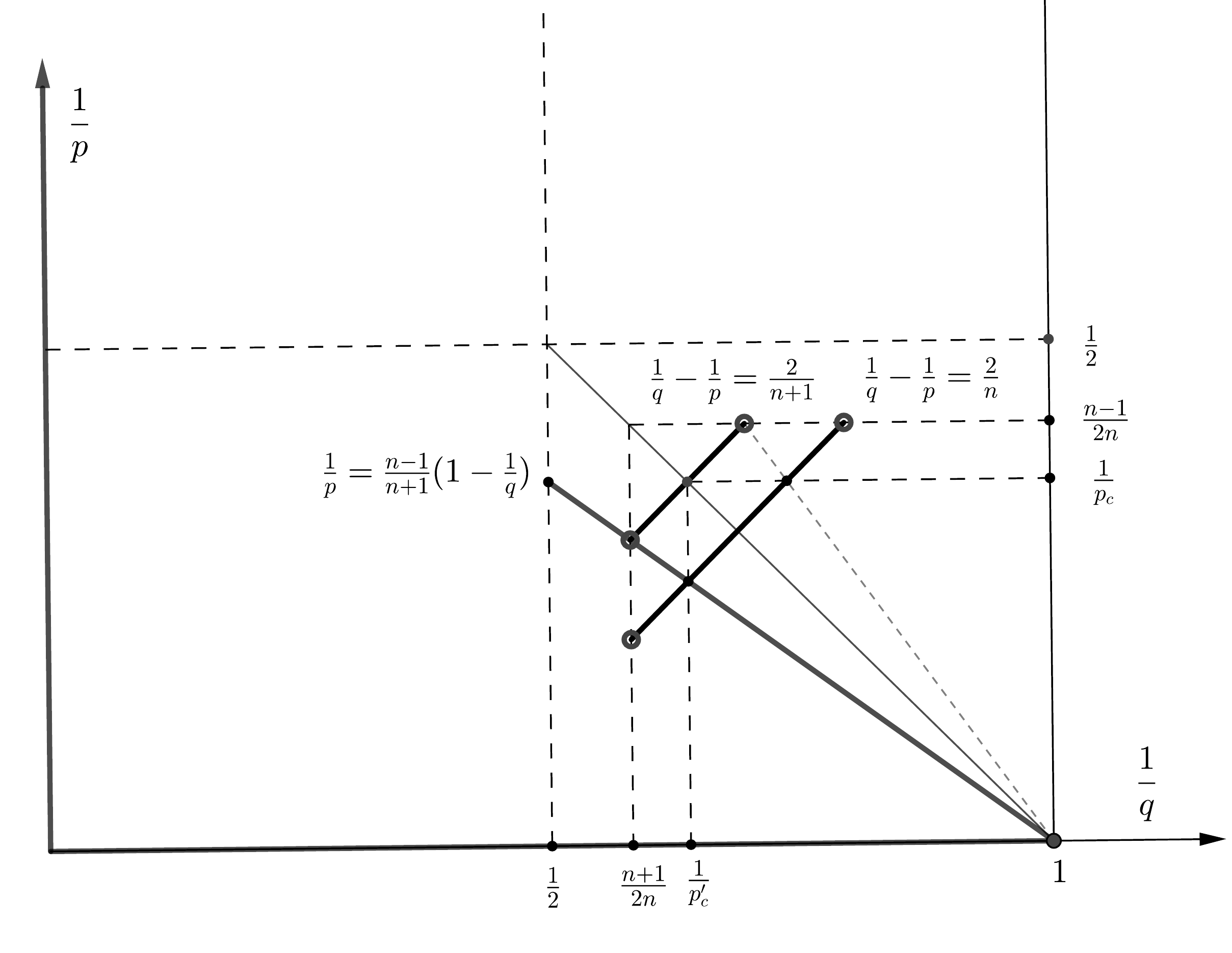}
   \caption{Interpolation: $n\ge3$}
   \label{fig1}
   \end{figure}
\begin{figure}
   \includegraphics[width=0.8\textwidth]{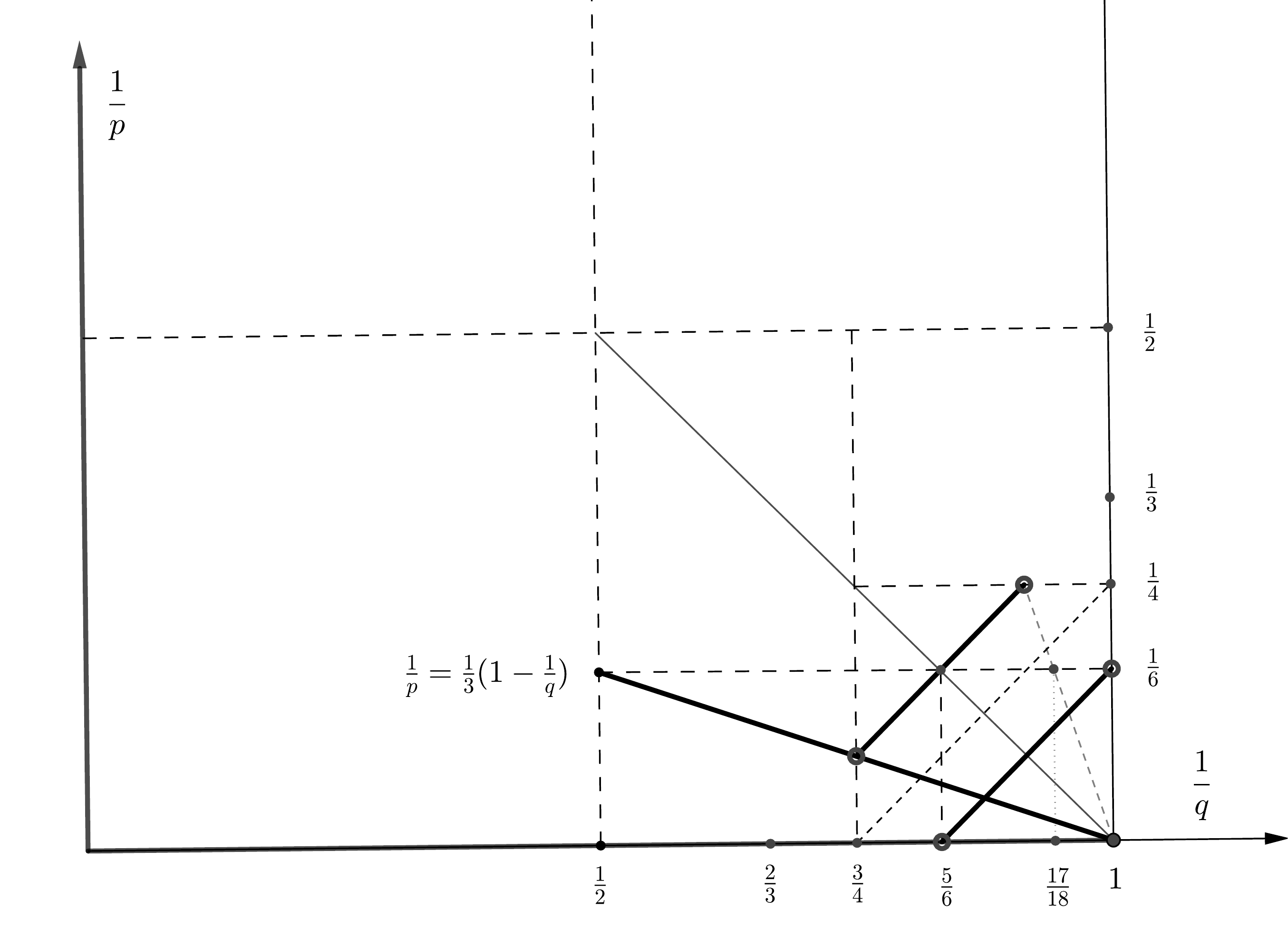}
   \caption{Interpolation: $n=2$}
   \label{fig2}
   \end{figure}

On the other hand, by Young's inequality, $T_j$ satisfies the same $L^q-L^p$ bound as in \eqref{c3} on the line $(\frac1q,0)$. Note that $\frac1q>\frac{n+1}{2n}$ ensures the convergence of the geometric series from summing \eqref{c3}.
By a simple calculation, when $\frac{2n}{n+1}<\alpha<2$, the fractional Sobolev line $ \frac1{q}-\frac1{p}=\frac \alpha n$ can intersect the line segment $\{\frac1p=\frac{n-1}{n+1}(1-\frac1q),\ \frac{n+1}{2n}<\frac1q\leq1\}$. Moreover, the fractional Sobolev line intersects the line $(\frac{n+1}{2n},\frac1p)$ at the point $(\frac{n+1}{2n},\frac{n+1-2\alpha}{2n})$. So by summing over a geometric series, and by a simple interpolation and duality argument, we get when $n\ge3$ and $\frac{2n}{n+1}<\alpha<2$
$$  \|  \tfrac{(\lambda+i)^{(2-\alpha)}}{\alpha/2}\sum\limits_{j=0}^{j_0}T_j f\|_{L^{p}(M)}\le C  \|f\|_{L^{p^*}(M)}
$$
if \[\tfrac{n+1-2\alpha}{2n}<\tfrac1{p}<\tfrac{n-1}{2n},\ \tfrac1{p^*}-\tfrac1p=\tfrac\alpha n.\]
Since $\frac{n+1-2\alpha}{2n}<\frac1{p_c}<\frac{n-1}{2n}$, we complete the proof for $n\ge3$. Similarly, when $n=2$, $p_c=6$, we still have this inequality for $6\le p<\frac{4}{3-2\alpha}$ if $\frac43<\alpha<\frac32$, and for $6\le p<\infty$ if $\frac32\le \alpha<\frac53$. Here the upper bound $\frac53$ is needed to ensure $p^*>1$ for $p\ge 6$. See Figure \ref{fig2}. So the proof is finished.
\end{proof}
\begin{proposition}\label{p2}
  Let $n\ge2$ and $0<\alpha<2$. If \hspace{1mm}$\tilde{T}_\lambda$  is defined as in \eqref{Ttilde}, $p_c=\frac{2(n+1)}{n-1}$, then for $\lambda \geq 1$,
  \begin{equation}\label{c4}
     \| \tilde{T}_\lambda f\|_{L^{p}(M)}\leq C_{\delta} \lambda^{1/2-\alpha+\sigma(p)}\|f\|_{L^{2}(M)}
  \end{equation}
  provided that $p\in[p_c,\infty]$ if $n=2$, $1<\alpha<2$ $($or $n=3$, $\frac32<\alpha<2)$, and $p\in[p_c,\frac{2n}{n-2\alpha})$ if  $n\ge2\alpha$, $\frac{n}{n+1}<\alpha<2$, where $C_\delta$ is a constant dependent on $\delta$.
 Moreover, if $n\ge3$, $0<\alpha<2$ $($or $n=2$, $0<\alpha<\frac53$$)$, $p_c\le p<\infty$, $\frac{1}{p^*}-\frac{1}{p}=\frac{\alpha}{n}$, then for $\lambda\ge \delta^{-(n+1)/\alpha}$
   \begin{equation}\label{c5}
     \| \tilde{T}_\lambda f\|_{L^{p}(M)}\leq C_{2} \|f\|_{L^{p^*}(M)}
  \end{equation}
  where $C_{2}$ is a constant independent of $\delta$.
\end{proposition}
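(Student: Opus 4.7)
The plan is to analyze $\tilde T_\lambda$ indirectly via the defining identity
\begin{equation*}
\tilde T_\lambda+\tilde R_\lambda \;=\; (\lapp-z)^{-1}-\tfrac{z^{(2-\alpha)/\alpha}}{\alpha/2}(-\lap-z^{2/\alpha})^{-1},\qquad z=(\lambda+i)^\alpha,
\end{equation*}
which trades direct analysis of $\tilde T_\lambda$ for (i) a symbol analysis of the difference of two resolvents, in which the leading poles at $\tau=\lambda+i$ will cancel, and (ii) control of the nonlocal remainder $\tilde R_\lambda$ by integration by parts in $t$, exactly as was done for $R_{\lambda,1}$ in the proof of Proposition~\ref{p1}.

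For \eqref{c4}, I would regard $\tilde T_\lambda$ as a spectral multiplier in $P=\sqrt{-\lap}$ with symbol $\tilde T_\lambda(\tau)=m_{\mathrm{full}}(\tau)-\tilde R_\lambda(\tau)$, where
\begin{equation*}
m_{\mathrm{full}}(\tau)=(\tau^\alpha-z)^{-1}-\tfrac{z^{(2-\alpha)/\alpha}}{\alpha/2}(\tau^2-z^{2/\alpha})^{-1}.
\end{equation*}
A two-term Taylor expansion in $w=\tau-(\lambda+i)$ shows that both resolvent symbols have the identical residue $(\lambda+i)^{1-\alpha}/\alpha$ at $\tau=\lambda+i$, and the next-order coefficients $-(\alpha-1)(\lambda+i)^{-\alpha}/(2\alpha)$ and $-(\lambda+i)^{-\alpha}/(2\alpha)$ combine to give
\begin{equation*}
m_{\mathrm{full}}(\tau)=\tfrac{2-\alpha}{2\alpha}(\lambda+i)^{-\alpha}+O\big(\lambda^{-\alpha-1}(|\tau-\lambda|+1)\big).
\end{equation*}
A direct comparison over the remaining ranges yields $|m_{\mathrm{full}}(\tau)|\ls\lambda^{-\alpha}$ for $\tau\ls\lambda$ and $|m_{\mathrm{full}}(\tau)|\ls\tau^{-\alpha}$ for $\tau\gs\lambda$. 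Integration by parts in $t$ against $\cos(t\tau)$, combined with the pointwise bound on $\gamma^{\alpha/2}/(\gamma^\alpha-2z\gamma^{\alpha/2}\cos(\pi\alpha/2)+z^2)$ and the $e^{-\sqrt\gamma\delta/2}$ decay from the tail of $\rho$, gives $|\tilde R_\lambda(\tau)|\ls_{N,\delta}(1+\tau)^{-N}$. Applying Sogge's spectral projection bound $\|\chi_kf\|_{L^p}\ls k^{\sigma(p)}\|\chi_kf\|_{L^2}$ together with Cauchy--Schwarz yields
\begin{equation*}
\|\tilde T_\lambda f\|_{L^p} \le \Big(\sum_k \big(\sup_{\tau\in[k-1,k)}|\tilde T_\lambda(\tau)|\big)^2 k^{2\sigma(p)}\Big)^{1/2}\|f\|_{L^2}.
\end{equation*}
The peak contribution $\sum_{k\ls\lambda}\lambda^{-2\alpha}k^{2\sigma(p)}\ls\lambda^{1-2\alpha+2\sigma(p)}$ produces exactly $\lambda^{1/2-\alpha+\sigma(p)}$ after the square root, while the tail $\sum_{k\gg\lambda}k^{-2\alpha+2\sigma(p)}$ converges precisely when $\sigma(p)<\alpha-1/2$, which is equivalent to $p<\tfrac{2n}{n-2\alpha}$ and matches the upper endpoint in the statement.

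For \eqref{c5}, I would mimic the dyadic wave-kernel argument used for \eqref{c2}. Swapping the order of integration, I decompose the $t$-cutoff as $\rho(t/\delta)=\tilde\rho(\lambda t)+\sum_{j=1}^{j_0}\beta(\lambda 2^{-j}t)$ with $\lambda^{-1}2^{j_0}\approx\delta$, and correspondingly write $\tilde T_\lambda=\sum_{j=0}^{j_0}\tilde S_j$ as in \eqref{daa}. For $\tilde S_0$, the $O(\lambda^{-1})$ smallness of the $t$-support together with integration by parts and the $\gamma$-integration (which contributes an extra $\lambda^{-\alpha}$) gives a $\tau$-symbol satisfying $|\partial_\tau^j \tilde S_0(\tau)|\ls_j(1+|\tau|)^{-\alpha-j}$ analogous to \eqref{8.4}, so $\tilde S_0$ is a pseudodifferential operator of order $-\alpha$ whose kernel satisfies $|\tilde S_0(x,y)|\ls d_g(x,y)^{\alpha-n}$ on its support $\{d_g(x,y)\ls\lambda^{-1}\}$; Hardy--Littlewood--Sobolev then gives the claimed $L^{p^*}\to L^p$ bound. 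For $\tilde S_j$ with $j\ge1$, I plug in the Hadamard parametrix for $\cos(tP)$ and rescale as in \cite[(2.18)]{sy}, obtaining a near-diagonal piece controlled by Young's inequality and an oscillatory piece of Carleson--Sj\"olin type \eqref{Tjkernel}; Theorem~2.2.1 of \cite{fio} gives the bound \eqref{c3}, multiplied by a $\gamma$-integral factor that is controlled by integrating the rational weight against $e^{-\sqrt\gamma t}$ on the support $t\sim\varepsilon=2^j/\lambda$. Summing along the Sobolev line $\tfrac1{p^*}-\tfrac1p=\tfrac\alpha n$ in the admissible region $\tfrac1{p^*}>\tfrac{n+1}{2n}$ then yields \eqref{c5} uniformly in $\delta$; the hypothesis $\lambda\ge\delta^{-(n+1)/\alpha}$ enters precisely to absorb the boundary factor produced at the top scale $j=j_0$.

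The main obstacle is the extra $\gamma$-integration absent from Proposition~\ref{p1}: one must verify, uniformly in the dyadic scale $\varepsilon=2^j/\lambda$, that the complex rational weight combined with $e^{-\sqrt\gamma t}$ produces an integrable profile contributing exactly the right power of $\lambda$ in each piece, and show that the residue cancellation in $m_{\mathrm{full}}$ truly yields $O(\lambda^{-\alpha})$ rather than the full resolvent's $O(\lambda^{1-\alpha})$. Both steps are somewhat delicate computations with complex-valued rational functions, and checking that the next-to-leading coefficients match to produce the announced gain of $\lambda^{1/2}$ over the full-resolvent bound is the core technical point.
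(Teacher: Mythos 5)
Your symbol-level argument for \eqref{c4} is correct and takes a genuinely different route from the paper's. The paper bounds $K=\frac{\sin(\pi\alpha/2)}{\pi}\int_0^\infty\frac{\gamma^{\alpha/2}(\gamma-\lap)^{-1}}{\gamma^\alpha-2z\gamma^{\alpha/2}\cos(\pi\alpha/2)+z^2}\,d\gamma$ by applying the $L^2\to L^p$ resolvent estimate $\|(\gamma-\lap)^{-1}\|_{L^2\to L^p}\ls\gamma^{-3/4+\sigma(p)/2}$ at fixed $\gamma$ and integrating via Minkowski, whereas you compute the spectral symbol $m_{\mathrm{full}}(\tau)$ directly and exploit the cancellation of the two simple poles at $\tau=\lambda+i$ to get $|m_{\mathrm{full}}(\tau)|\ls\min(\lambda^{-\alpha},\tau^{-\alpha})$; after Sogge's cluster bounds and Cauchy--Schwarz this recovers the power $\lambda^{1/2-\alpha+\sigma(p)}$ and the same endpoint $p<\tfrac{2n}{n-2\alpha}$. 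Both ultimately appeal to the spectral projection theorem, but your version makes the mechanism behind the gain over the raw resolvent estimate $(\tau^\alpha-z)^{-1}$ transparent.

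Your proposal for \eqref{c5}, however, has a genuine gap. You plan to decompose $\tilde T_\lambda=\sum_j\tilde S_j$ at time scales $t\sim\eps=2^j/\lambda$, insert the Hadamard parametrix for $\cos tP$, and apply Carleson--Sj\"olin (\cite[Theorem 2.2.1]{fio}) to an ``oscillatory piece'' of the type \eqref{Tjkernel}. But the oscillatory factor $e^{i\lambda d_g(x,y)}$ in \eqref{Tjkernel} is produced by the weight $e^{i\lambda t}$ in $T_{\lambda,1}$ interacting with $\cos tP$ by stationary phase. In $\tilde T_\lambda$ that weight is the purely decaying $e^{-\sqrt\gamma t}$, and after the $t$- and $\gamma$-integrations no phase of size $\lambda\, d_g(x,y)$ survives: the resulting kernel is a genuine Riesz-potential--type kernel with no oscillation for Carleson--Sj\"olin to act on. The non-oscillatory bound at scale $\eps$ is $\sim\eps^{\alpha-n}$, whose Young contribution per scale is $O(1)$, so the dyadic sum does not converge for any $\alpha<2$; this is precisely why the $\alpha=2$ argument needed Carleson--Sj\"olin, and here that tool is unavailable. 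The paper resolves this by abandoning wave-kernel oscillation for $\tilde T_\lambda$ entirely: Lemma~\ref{lemma1} writes $(\gamma-\lap)^{-1}=\int_0^\infty e^{t\lap-\gamma t}\,dt$, invokes the Li--Yau Gaussian heat-kernel bound, and proves the pointwise estimate $|K(x,y)|\ls d_g(x,y)^{\alpha-n}$ (plus negligible errors), so that Hardy--Littlewood--Sobolev gives $\|K\|_{L^{p^*}\to L^p}\ls 1$ with a constant independent of $\delta$; the error $\|\tilde R_\lambda\|_{L^{p^*}\to L^p}\ls\delta^{-n-1}\lambda^{-\alpha}$ is then absorbed under the hypothesis $\lambda\ge\delta^{-(n+1)/\alpha}$. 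You should replace your Carleson--Sj\"olin step with an argument of this heat-kernel type.
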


\begin{proof}{}
To prove \eqref{c4}, we write
\begin{equation}\label{c6}
\begin{aligned}
    \tilde{T}_\lambda f&=\frac{\sin(\pi \alpha/2)}{\pi}\int_0^\infty\frac{\gamma^{\alpha/2}(\gamma-\lap)^{-1}f}{\gamma^\alpha-2z\gamma^{\alpha/2}\cos(\pi \alpha/2)+z^2}d\gamma -\tilde{R}_\lambda f \\
    &:=Kf -\tilde{R}_\lambda f
    \end{aligned}
\end{equation}
where $\tilde{R}_\lambda$ is defined in \eqref{Rtilde}. Here we need some observations in \cite{HSZ}. Note that
\[\gamma^\alpha-2z\gamma^{\alpha/2}\cos(\pi \alpha/2)+z^2=(\gamma^{\alpha/2}-ze^{i\pi \alpha/2})(\gamma^{\alpha/2}-ze^{-i\pi \alpha/2}).\]
Since $z=(\lambda+i)^\alpha$ and $0<\arg(z)<\pi \alpha/4$ when $\lambda$ is large, we have $|\arg(ze^{\pm i\pi \alpha/2})|>\pi \alpha/4$. Then
\[|\gamma^{\alpha/2}-ze^{\pm i\pi \alpha/2}|\approx \gamma^{\alpha/2}+|z|\]
which implies
\begin{equation}
|\gamma^\alpha-2z\gamma^{\alpha/2}\cos(\pi \alpha/2)+z^2|\approx \gamma^{\alpha}+|z|^2.
\end{equation}
Moreover, by using the proof of \cite[Proposition 2.4]{BSS19} or \cite[Lemma 3.2]{FKS}, it is not hard  to see that
$$ \|(\gamma-\lap)^{-1}f\|_{L^{p}(M)}\le C_\delta \gamma^{-\frac{3}4+\frac{\sigma(p)}{2}}\|f\|_{L^{2}(M)}
$$
provided that $p\in[p_c,\infty]$ if $n=2$ or $3$, $p\in[p_c,\infty)$ if $n=4$ and $p\in[p_c,\frac{2n}{n-4}]$ for $n\ge 5$.
So by Minkowski inequality, if $\alpha>\frac12+\sigma(p)$, then
\begin{equation}\label{K2p}
    \begin{aligned}
   \| Kf\|_{L^{p}(M)}&\leq C_\delta\int_0^\infty\frac{\gamma^{\alpha/2}\gamma^{-\frac{3}4+\frac{\sigma(p)}{2}}}{\gamma^{\alpha}+|z|^2}d\gamma\|f\|_{L^{2}(M)} \\
   &\leq C_\delta \lambda^{1/2-\alpha+\sigma(p)}\|f\|_{L^{2}(M)}.
    \end{aligned}
\end{equation}
 Note that $\alpha>\frac12+\sigma(p)$ holds provided that $p\in[p_c,\infty]$ if $n=2$ and $1<\alpha<2$ $($or $n=3$ and $\frac32<\alpha<2)$, and $p\in[p_c,\frac{2n}{n-2\alpha})$ if $n\ge2\alpha$, $\frac{n}{n+1}<\alpha<2$.

 Next, for the operator $\tilde{R}_\lambda$, we first consider the operator
$$R_{0,\sqrt{\gamma}}=\frac{1}{-\sqrt{\gamma}}\int_0^\infty (1-\rho(t/\delta))e^{-\sqrt{\gamma}t}\cos tP dt.
$$
The multiplier associated to the operator $R_{0,\sqrt{\gamma}}$  is
$$R_{0,\sqrt{\gamma}}(\tau)=\frac{1}{-\sqrt{\gamma}}\int_0^\infty (1-\rho(t/\delta))e^{-\sqrt{\gamma}t}\cos t\tau dt.
$$
Integration by parts in the $t$ variable gives us
\begin{equation}\label{mul2}
|R_{0,\sqrt{\gamma}}(\tau)|\leq C_{N}\delta^{-N}\gamma^{-1} (1+|\sqrt{\gamma}+\tau|)^{-N},\ N=1,2,3,...
\end{equation}
If we use the spectral projection bounds for $\chi_k$ in \cite{sogge88}, then
\begin{equation}
    \begin{aligned}
    \|R_{0,\sqrt{\gamma}}f\|_{L^{p}(M)}&\leq \sum\limits_{k=1}^\infty  \|\chi_kR_{0,\sqrt{\gamma}} f\|_{L^{p}(M)} \\
    &\leq \sum\limits_{k=1}^\infty k^{\sigma(p)}\big(\sup_{\tau\in[k-1,k)}R_{0,\sqrt{\gamma}}(\tau)\big)\|\chi_k f\|_{L^{2}(M)}\\
    &\leq C_{N}\delta^{-N}\sum\limits_{k=1}^\infty k^{\sigma(p)}\gamma^{-1} (1+|k+\sqrt{\gamma}|)^{-N}\|f\|_{L^{2}(M)}\\
    &\leq C_{\delta}\gamma^{-1}\|f\|_{L^{2}(M)}
    \end{aligned}
    \nonumber
\end{equation}
So we have
\begin{equation}
    \begin{aligned}
    \|\tilde{R}_\lambda f\|_{L^{p}(M)}&\leq C_\delta\int_0^\infty\frac{\gamma^{\alpha/2}\gamma^{-1}}{\gamma^{\alpha}+|z|^2}d\gamma\|f\|_{L^{2}(M)} \\
   &\leq C_\delta \lambda^{-\alpha}\|f\|_{L^{2}(M)} \qquad\text{better than \eqref{K2p}}.
    \end{aligned}
\end{equation}
Consequently, \eqref{c4} follows from Minkowski inequality.

To prove \eqref{c5}, we use the same strategy, as well as the following lemma.
\begin{lemma}\label{lemma1}
Let $n\ge2$ and $0<\alpha<2$. If $K$ is defined as in \eqref{c6}, then for $1< p^*< p<\infty$, $\frac{1}{p^*}-\frac1p=\frac \alpha n$,
 \begin{equation}\label{Kunif}
     \| K f\|_{L^{p}(M)}\leq C\|f\|_{L^{p^*}(M)}
  \end{equation}
  where $C$ is a constant independent of $\delta$.
\end{lemma}
Unlike the $L^2-L^p$ bound \eqref{K2p}, this lemma cannot directly follow  from the resolvent estimates and Minkowski inequality. We will prove it by using Li-Yau's heat kernel estimates and Sogge's spectral projection bounds. We postpone the proof of this lemma, and finish proving the proposition first. If $n\ge3$, $0<\alpha<2$, and $p_c\le p<\infty$, then there is a unique $p^*\in(1,p)$ such that $\frac1{p^*}-\frac1p=\frac{\alpha}{n}$. Note that it is also true for $n=2$, $0<\alpha<\frac53$ and $p_c\le p<\infty$. For the operator $\tilde{R}_\lambda$, we use \eqref{mul2} and the spectral projection bounds twice:
\begin{equation}
    \begin{aligned}
    \|R_{0,\sqrt{\gamma}} f\|_{L^{p}(M)}&\leq \sum\limits_{k=1}^\infty  \|\chi_kR_{0,\sqrt{\gamma}} f\|_{L^{p}(M)} \\
    &\leq \sum\limits_{k=1}^\infty k^{\sigma(p)}\big(\sup_{\tau\in[k-1,k)}R_{0,\sqrt{\gamma}}(\tau)\big)\|\chi_k f\|_{L^{2}(M)}\\
    &\leq C_{N}\delta^{-N}\sum\limits_{k=1}^\infty k^{\sigma(p)+\sigma(p^*)}\gamma^{-1} (1+|k+\sqrt{\gamma}|)^{-N}\|f\|_{L^{p^*}(M)}\\
    &\leq C\delta^{-n-1}\gamma^{-1}\|f\|_{L^{p^*}(M)} \qquad\text{for}\  N=n+1
    \end{aligned}
    \nonumber
\end{equation}
since $\sigma(p)+\sigma(p^*)\le n-1$.
Then we have
\begin{equation}
    \begin{aligned}
    \|\tilde{R}_\lambda f\|_{L^{p}(M)}&\leq C\frac{\sin(\pi \alpha/2)}{\pi}\int_0^\infty\frac{\gamma^{\alpha/2}\gamma^{-1}\delta^{-n-1}}{\gamma^{\alpha}+|z|^2}d\gamma\|f\|_{L^{2}(M)} \\
   &\leq C\delta^{-n-1}\lambda^{-\alpha}\|f\|_{L^{p^*}(M)} \\
   & \leq C \|f\|_{L^{p^*}(M)} \qquad \text{for $\lambda\ge\delta^{-(n+1)/\alpha}$}.
    \end{aligned}
\end{equation}
So the proof of Proposition \ref{p2} is complete.
\end{proof}
\begin{proof}[Proof of Lemma \ref{lemma1}]
In order to use heat kernel, we write the resolvent operator as
\begin{equation}
\begin{aligned}
(\gamma-\lap)^{-1}&=\int_0^\infty e^{-\gamma t-(-\lap)t}dt\\
&=\int_0^1 e^{-\gamma t-(-\lap)t}dt+\int_1^\infty e^{-\gamma t-(-\lap)t}dt \\
&:=H_0+H_1.
\end{aligned}
\end{equation}
The operator $H_1$ is a good error term. First, by functional calculus, $$H_1=e^{-\gamma -(-\lap)} (\gamma-\lap)^{-1}.$$
 The  multiplier associated to the operator $H_1$ is equal to $H_1(\tau)=e^{-\gamma -\tau^2} (\gamma+\tau^2)^{-1}.$
Then we use the spectral projection bounds again
 \begin{equation}
    \begin{aligned}
    \|H_1f\|_{L^{p}(M)}&\leq \sum\limits_{k=1}^\infty  \|\chi_kH_1 f\|_{L^{p}(M)} \\
    &\leq \sum\limits_{k=1}^\infty k^{\sigma(p)}\big(\sup_{\tau\in[k-1,k)}H_1(\tau)\big)\|\chi_k f\|_{L^{2}(M)}\\
    &\leq \sum\limits_{k=1}^\infty k^{\sigma(p)+\sigma(p^*)}e^{-\gamma}e^{-(k-1)^2}(\gamma+(k-1)^2)^{-1}\|f\|_{L^{p^*}(M)}\\
    &\leq Ce^{-\gamma}\|f\|_{L^{p^*}(M)}.
    \end{aligned}
    \nonumber
\end{equation}
So we have
\begin{equation}
\begin{aligned}
\|&\frac{\sin(\pi \alpha/2)}{\pi}\int_0^\infty\frac{\gamma^{\alpha/2}H_1f}{\gamma^\alpha-2z\gamma^{\alpha/2}\cos(\pi \alpha/2)+z^2}d\gamma\|_{L^{p}(M)} \\
&\leq C\int_0^\infty\frac{\gamma^{\alpha/2}e^{-\gamma}}{\gamma^{\alpha}+|z|^2}d\gamma\|f\|_{L^{p^*}(M)} \\
&\leq C\lambda^{-2\alpha}\|f\|_{L^{p^*}(M)}  \quad {\rm better\ than}\ \eqref{Kunif}.
\end{aligned}
\end{equation}
Now we only need to consider $H_0$. Recall Li-Yau's upper bounds on the heat kernel \cite{LY}
$$ |e^{t\lap}(x,y)|\leq C_0 t^{-\frac n2}  e^{-c_0d_g^2(x,y)/t},\ 0<t\le 1.$$
By Minkowski inequality, we can bound the operator kernel by the double integral
\begin{equation}
\begin{aligned}
\Big|\tfrac{\sin(\pi \alpha/2)}{\pi}\int_0^\infty\frac{\gamma^{\alpha/2}H_0(x,y)}{\gamma^\alpha-2z\gamma^{\alpha/2}\cos(\pi \alpha/2)+z^2}d\gamma\Big|  \\
\leq C\int_0^1\int_0^\infty\frac{\gamma^{\alpha/2}e^{-\gamma t}t^{-\frac n2}  e^{-c_0d_g^2(x,y)/t}}{\gamma^{\alpha}+|z|^2}d\gamma dt.
\end{aligned}
\end{equation}
We will divide the kernel  into four parts.
\qquad\newline

\noindent i) $|z|^{-\frac2\alpha}\le t\le 1, \ d_g(x,y)\le |z|^{-\frac1\alpha}$

In this case
\begin{equation}
\begin{aligned}
K_1(x,y)&\leq C\int_{|z|^{-\frac2\alpha}}^1\int_0^\infty\frac{\gamma^{\alpha/2}e^{-\gamma t}t^{-\frac n2}  e^{-c_0d_g^2(x,y)/t}}{\gamma^{\alpha}+|z|^2}d\gamma dt  \\
&\leq C |z|^{-2}\int_{|z|^{-\frac2\alpha}}^1t^{-\frac \alpha2-1-\frac n2} dt \\
&\leq C |z|^{-1+\frac n\alpha}.
\end{aligned}
\end{equation}
If we let $\frac 1r=1-(\frac1{p^*}-\frac1{p})=\frac{n-\alpha}{n}$, then by Young's inequality we have
\begin{equation}
     \| \int K_1(x,y)f(y)dy\|_{L^{p}(M)}\leq C \|f\|_{L^{p^*}(M)}.
  \end{equation}
\noindent ii) $|z|^{-\frac2\alpha}\le t\le 1,\ d_g(x,y)\ge |z|^{-\frac1\alpha}$

  In this case
  \begin{equation}
\begin{aligned}
K_2(x,y) &\leq C\int_{|z|^{-\frac2\alpha}}^1\int_0^\infty\frac{\gamma^{\alpha/2}e^{-\gamma t}t^{-\frac n2}  e^{-c_0d_g^2(x,y)/t}}{\gamma^{\alpha}+|z|^2}d\gamma dt  \\
&\leq C |z|^{-2}\int_0^1\int_0^\infty\gamma^{\alpha/2}e^{-\gamma t}t^{-\frac n2}  e^{-c_0d_g^2(x,y)/t}d\gamma dt  \\
&\leq C|z|^{-2}\int_0^1t^{-\frac n2-\frac \alpha2-1}  e^{-c_0d_g^2(x,y)/t} dt \\
&\leq C |z|^{-2} {d_g(x,y)}^{-n-\alpha}.
\end{aligned}
\end{equation}
 By Young's inequality, we have
\begin{equation}
     \| \int K_2(x,y)f(y)dy\|_{L^{p}(M)}\leq C\|f\|_{L^{p^*}(M)}.
  \end{equation}

\noindent iii) $0<t<|z|^{-\frac2\alpha},\ d_g(x,y)\leq |z|^{-\frac1\alpha}$

 In this case
  \begin{equation}
\begin{aligned}
K_3(x,y) &\leq C\int_0^{|z|^{-\frac2\alpha}}\int_0^\infty\frac{\gamma^{\alpha/2}e^{-\gamma t}t^{-\frac n2}  e^{-c_0d_g^2(x,y)/t}}{\gamma^{\alpha}+|z|^2}d\gamma dt  \\
&\leq C \int_0^1\int_0^\infty\gamma^{-\alpha/2}e^{-\gamma t}t^{-\frac n2}  e^{-c_0d_g^2(x,y)/t}d\gamma dt  \\
&\leq C \int_0^1t^{-\frac n2+\frac \alpha2-1}  e^{-c_0d_g^2(x,y)/t} dt \\
&\leq C  {d_g(x,y)}^{-n+\alpha}.
\end{aligned}
\end{equation}
Since $\frac{1}{p^*}-\frac{1}{p}=\frac{\alpha}{n}$, by Hardy-Littlewood-Sobolev inequality, we have
\begin{equation}
     \| \int K_3(x,y)f(y)dy\|_{L^{p}(M)}\leq C \|f\|_{L^{p^*}(M)}.
  \end{equation}

\noindent iv) $0<t<|z|^{-\frac2\alpha},\ d_g(x,y)\geq |z|^{-\frac1\alpha}$

 In this case
  \begin{equation}
\begin{aligned}
K_4(x,y) &\leq C\int_0^{|z|^{-\frac2\alpha}}\int_0^\infty\frac{\gamma^{\alpha/2}e^{-\gamma t}t^{-\frac n2}  e^{-c_0d_g^2(x,y)/t}}{\gamma^{\alpha}+|z|^2}d\gamma dt  \\
&\leq C \int_0^{|z|^{-\frac2\alpha}}\int_0^\infty\gamma^{-\alpha/2}e^{-\gamma t}t^{-\frac n2}  e^{-c_0d_g^2(x,y)/t}d\gamma dt  \\
&\leq C\int_0^{|z|^{-\frac2\alpha}}t^{-\frac n2+\frac \alpha2-1}  e^{-c_0d_g^2(x,y)/t} dt\\
&\leq C |z|^{\frac{n-\alpha}{\alpha}}  \int_0^1t^{-\frac n2+\frac \alpha2-1}  e^{-c_0|z|^{\frac2\alpha} d_g^2(x,y)/t} dt \\
&\leq C |z|^{\frac{n-\alpha}{\alpha}}  e^{-c_0|z|^{2/\alpha} d_g^2(x,y)/2}.
\end{aligned}
\end{equation}
By Young's inequality, we have
\begin{equation}
     \| \int K_4(x,y)f(y)dy\|_{L^{p}(M)}\leq C \|f\|_{L^{p^*}(M)}.
  \end{equation}
So we finish the proof of Lemma \ref{lemma1}.
\end{proof}
\begin{proposition}\label{p3}
  Let $n\ge2$ and $0<\alpha<2$. If \hspace{1mm}$R_{\lambda,1}$  is defined as in \eqref{regular resolvent}, $p_c=\frac{2(n+1)}{n-1}$, then for $\lambda \geq 1$, $p_c\le p \le \infty$
  \begin{equation}\label{c7}
     \| \tfrac{(\lambda+i)^{(2-\alpha)}}{\alpha/2}R_{\lambda,1}((-\lap)^{\alpha/2}-z) f\|_{L^{p}(M)}\leq C_{\delta} \lambda^{\sigma(p)}\|f\|_{L^{2}(M)}.
  \end{equation}
  Similarly, for the operator $\tilde{R}_\lambda$ defined as in \eqref{Rtilde}
    \begin{equation}\label{c8}
     \| \tilde{R}_\lambda((-\lap)^{\alpha/2}-z) f\|_{L^{p}(M)}\leq C_{\delta}\|f\|_{L^{2}(M)}.
  \end{equation}
\end{proposition}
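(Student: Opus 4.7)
The plan is to express both operators as spectral multipliers of $P=\sqrt{-\lap}$ and then reduce to pointwise control on unit frequency bands via Sogge's spectral projection bound $\|\chi_k f\|_{L^p(M)}\ls k^{\sigma(p)}\|f\|_{L^2(M)}$ (valid for $p_c\le p\le\infty$). Letting $\chi_k$ denote the projection of $P$ onto $[k-1,k)$, we have for any bounded spectral symbol $m$
\begin{equation*}
\|m(P)f\|_{L^p(M)}\le\sum_{k=1}^\infty k^{\sigma(p)}\Big(\sup_{\tau\in[k-1,k)}|m(\tau)|\Big)\|f\|_{L^2(M)}.
\end{equation*}
With $z=(\lambda+i)^\alpha$, the task reduces to estimating on each interval $[k-1,k)$ the symbols
\begin{equation*}
m(\tau)=\tfrac{(\lambda+i)^{(2-\alpha)}}{\alpha/2}R_{\lambda,1}(\tau)(\tau^\alpha-z)\quad\text{and}\quad \tilde m(\tau)=\tilde R_\lambda(\tau)(\tau^\alpha-z).
\end{equation*}

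For \eqref{c7} I combine \eqref{mul} with two regimes for $\tau\ge 0$. When $|\tau-\lambda|\le\lambda/2$, the mean value theorem applied to $w\mapsto w^\alpha$ gives $|\tau^\alpha-(\lambda+i)^\alpha|\ls \lambda^{\alpha-1}(1+|\tau-\lambda|)$, which absorbs one copy of $(1+|\lambda-\tau|)^{-1}$ from \eqref{mul}, producing $|m(\tau)|\le C_{N,\delta}(1+|\lambda-\tau|)^{-N+1}$. When $|\tau-\lambda|>\lambda/2$ I use only $|\tau^\alpha-z|\le C(\tau^\alpha+\lambda^\alpha)$; the surviving $(1+|\lambda-\tau|)^{-N}$ is then of size $\lambda^{-N}$, which for $N$ large defeats any polynomial growth in $\tau$. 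The $(1+|\lambda+\tau|)^{-N}$ term in \eqref{mul} is always harmless since $\lambda+\tau\ge\lambda$. Summing gives $\sum_k k^{\sigma(p)}\sup|m(\tau)|\ls\lambda^{\sigma(p)}$, and \eqref{c7} follows.

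For \eqref{c8}, substitute \eqref{mul2} into
\begin{equation*}
\tilde m(\tau)=\tfrac{\sin(\pi\alpha/2)}{\pi}\int_0^\infty\frac{\gamma^{\alpha/2}R_{0,\sqrt\gamma}(\tau)(\tau^\alpha-z)}{\gamma^\alpha-2z\gamma^{\alpha/2}\cos(\pi\alpha/2)+z^2}\,d\gamma
\end{equation*}
and use the observation from the proof of Proposition \ref{p2} that the denominator is comparable to $\gamma^\alpha+\lambda^{2\alpha}$. Changing variables $s=\sqrt\gamma$ and applying the crude $|\tau^\alpha-z|\le C(\tau^\alpha+\lambda^\alpha)$, it suffices to bound
\begin{equation*}
(\tau^\alpha+\lambda^\alpha)\int_0^\infty \frac{s^{\alpha-1}(1+s+\tau)^{-N}}{s^{2\alpha}+\lambda^{2\alpha}}\,ds.
\end{equation*}
Splitting the $s$-integral at $s=\lambda$ and dividing into the cases $\tau\le 1$, $1\le\tau\le\lambda$, $\tau>\lambda$, a direct calculation yields for $N$ large
\begin{equation*}
\sup_{\tau\in[k-1,k)}|\tilde m(\tau)|\le C_{N,\delta}\bigl(k^{\alpha-N}\lambda^{-\alpha}+k^{-N}\bigr),
\end{equation*}
(with $k^{\alpha-N}$ read as $1$ when $k\le 1$). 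Choosing $N>\sigma(p)+\alpha+2$, the sum $\sum_k k^{\sigma(p)}\sup|\tilde m|$ is bounded by a constant $C_\delta$ independent of $\lambda$, proving \eqref{c8}.

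The main technical obstacle is the joint case analysis in the $(s,\tau)$ plane for \eqref{c8}: the factor $(1+s+\tau)^{-N}$ is genuinely large only where $s+\tau\ls 1$, and one must balance it against the two regimes of the denominator $s^{2\alpha}+\lambda^{2\alpha}$ without losing a power of $\lambda$. Once that bookkeeping is carried out, the integration-by-parts parameter $N$ in \eqref{mul} and \eqref{mul2} can be taken arbitrarily large, so the remaining $k$-sums converge and the constants absorb into $C_\delta$.
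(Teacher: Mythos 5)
Your proof is correct and follows essentially the same strategy as the paper: reduce both estimates to pointwise control of the associated spectral multipliers and apply Sogge's spectral projection bound $\|\chi_k f\|_{L^p(M)}\ls k^{\sigma(p)}\|f\|_{L^2(M)}$ over unit frequency intervals. The only cosmetic difference is that for $\tilde R_\lambda$ you perform the $\gamma$-integral before summing over bands, whereas the paper applies the projection argument at fixed $\gamma$ and then integrates in $\gamma$ via Minkowski; this interchange does not change the substance.
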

\begin{proof}
To prove \eqref{c7}, if we recall \eqref{mul} and use the spectral projection bounds \cite{sogge88}
\begin{align*}\|R_{\lambda,1} ((-\lap)^{\alpha/2}-z)f\|_{L^{p}(M)}&\leq \sum\limits_{k=1}^\infty  \|\chi_kR_{\lambda,1}((-\lap)^{\alpha/2}-z) f\|_{L^{p}(M)} \\
    &\leq \sum\limits_{k=1}^\infty k^{\sigma(p)}(k^\alpha-(\lambda+i)^\alpha)\big(\sup_{\tau\in[k-1,k)}R_{\lambda,1}(\tau)\big)\|\chi_k f\|_{L^{2}(M)}\\
    &\leq C_{N,\delta}\sum\limits_{k=1}^\infty k^{\sigma(p)}\lambda^{-1}(k^\alpha-(\lambda+i)^\alpha)\Big( (1+|k-\lambda|)^{-N}+(1+|k+\lambda|)^{-N} \Big)\|f\|_{L^{2}(M)}\\
    &\leq C_{\delta}\lambda^{\sigma(p)-2+\alpha}\|f\|_{L^{2}(M)}.\end{align*}
\noindent To prove \eqref{c8}, we recall \eqref{mul2} and use the spectral projection bounds again
    \begin{align*}
    \|R_{0,\sqrt{\gamma}} ((-\lap)^{\alpha/2}-z)f\|_{L^{p}(M)}&\leq \sum\limits_{k=1}^\infty  \|\chi_kR_{0,\sqrt{\gamma}} ((-\lap)^{\alpha/2}-z)f\|_{L^{p}(M)} \\
    &\leq \sum\limits_{k=1}^\infty k^{\sigma(p)}(k^\alpha-(\lambda+i)^\alpha)\big(\sup_{\tau\in[k-1,k)}R_{0,\sqrt{\gamma}}(\tau)\big)\|\chi_k f\|_{L^{2}(M)}\\
    &\leq C_{N,\delta}\sum\limits_{k=1}^\infty k^{\sigma(p)}(k^\alpha-(\lambda+i)^\alpha)\gamma^{-1} (1+|k+\sqrt{\gamma}|)^{-N}\|f\|_{L^{2}(M)}\\
    &\leq C_{\delta}\gamma^{-1}\lambda^\alpha\|f\|_{L^{2}(M)}.
    \end{align*}
So we have
\begin{equation}
    \begin{aligned}
    \|\tilde{R}_\lambda((-\lap)^{\alpha/2}-z)f\|_{L^{p_c}(M)}&\leq C_\delta\frac{\sin(\pi \alpha/2)}{\pi}\int_0^\infty\frac{\gamma^{\alpha/2}\gamma^{-1}\lambda^\alpha}{\gamma^{\alpha}+|z|^2}d\gamma\|f\|_{L^{2}(M)} \\
   &\leq C_\delta \|f\|_{L^{2}(M)}.
    \end{aligned}
\end{equation}
So the proof is finished.\end{proof}
\section{Proof of Theorem \ref{quasi1}}
We need to assume $n\ge4$, since in this case $V\in L^{n/\alpha}\subset L^2$ and the right hand side of \eqref{quasi} makes sense. By a simple interpolation, it suffices to prove it for $p\in[p_c,\frac{2n}{n+1-2\alpha})$. For each small $\delta>0$ choose a maximal $\delta$-separated collection of points $x_j\in M, j=1,...,N_\delta, N_\delta\approx \delta^{-n}$.
Thus, $M=\cup B_j$ if $B_j$ is the $\delta$-ball about $x_j$, and if $B_j^*$ is the $2\delta$-ball about the same center
 \begin{equation}\label{d1}
 \sum\limits_{j=1}\limits^{N_\delta}\textbf{1}_{B_j^*}(x)\leq C_{M}
 \end{equation}
where $C_M$ is independent of $\delta$  if $\textbf{1}_{B_j^*}$ denotes the indicator function of $B_j^*$.  Since $V\in L^{n/\alpha}(M)$
we can fix $\delta>0$ small enough so that
\begin{equation}\label{d2}
 C_{M} \Big(C_0\sup\limits_{x\in M}\|V\|_{L^{n/\alpha}(B(x,2\delta))}\Big)^p\leq 1/2
 \end{equation}
where $C_0=\max(C_{1},C_{2}) $, the constants appeared in Proposition \ref{p1} and \ref{p2}.

Now recall \eqref{b3} for any $u\in C^\infty(M)$
\begin{equation}
\begin{aligned}
   u(x)=&\Big(\tfrac{(\lambda+i)^{(2-\alpha)}}{\alpha/2}T_{\lambda,1}+ \tilde{T}_\lambda\Big)\Big((-\lap)^{\alpha/2}-z+V\Big)u-\Big(\tfrac{(\lambda+i)^{(2-\alpha)}}{\alpha/2}T_{\lambda,1}+ \tilde{T}_\lambda\Big)(Vu) \\
   &+\Big(\tfrac{(\lambda+i)^{(2-\alpha)}}{\alpha/2}R_{\lambda,1} +\tilde{R}_\lambda\Big)((-\lap)^{\alpha/2}-z)u.
   \end{aligned}
\end{equation}
By Propositions  \ref{p1}, \ref{p2} and \ref{p3}, and the local property of $T_{\lambda,1}$ and  $\tilde{T}_\lambda$, we can estimate the $L^{p}$ norms of each of the terms over one of our $\delta$ balls as follows:
\begin{equation}\label{d3}
\begin{aligned}
   \|u\|_{L^{p}(B_j)}^p\leq\Big(C\lambda^{1-\alpha+\sigma(p)}\|((-\lap)^{\alpha/2}-z+V)u\|_{L^2(M)} +& C_\delta \lambda^{\sigma(p)}\|u\|_{L^2(M)} \Big)^p \\
   +&\Big(C_0\|Vu\|_{L^{p^*}(B_j^*)} \Big)^p.
   \end{aligned}
\end{equation}
Since we have $\frac{1}{p^*}-\frac{1}{p}=\frac{\alpha}{n},$
by Holder's inequality,
$$\|Vu\|_{L^{p^*}(B_j^*)} \leq\|V\|_{L^{\frac n\alpha}(B_j^*)} \|u\|_{L^{p}(B_j^*)}.
$$
Since $M$ is the union of the $B_j$, and the number of these balls is $\approx \delta^{-n}$, if we add up the bound in \eqref{d3}
and use \eqref{d1} and \eqref{d2} we get
\begin{equation}
\begin{aligned}
   &\|u\|_{L^{p}(M)}^p\leq \sum\limits_j  \|u\|_{L^{p}(B_j)}^p\\
   &\leq C_\delta\Big(\lambda^{1-\alpha+\sigma(p)}\|((-\lap)^{\alpha/2}-z+V)u\|_{L^2(M)} +  \lambda^{\sigma(p)}\|u\|_{L^2(M)} \Big)^p \\
   &\hspace{72mm}+\Big(C_0\sup\limits_j\|V\|_{L^{\frac n\alpha}(B_j^*)} \Big)^p\sum\limits_j\|u\|_{L^{p}(B_j^*)}^p \\
    &\leq C_\delta\Big(\lambda^{1-\alpha+\sigma(p)}\|((-\lap)^{\alpha/2}-z+V)u\|_{L^2(M)} +  \lambda^{\sigma(p)}\|u\|_{L^2(M)} \Big)^p \\
   &\hspace{72mm}+C_M\Big(C_0\sup\limits_j\|V\|_{L^{\frac n\alpha}(B_j^*)} \Big)^p\|u\|_{L^{p}(M)}^p \\
    &\leq C_\delta\Big(\lambda^{1-\alpha+\sigma(p)}\|((-\lap)^{\alpha/2}-z+V)u\|_{L^2(M)} +  \lambda^{\sigma(p)}\|u\|_{L^2(M)} \Big)^p
 +\tfrac12\|u\|_{L^{p}(M)}^p
   \end{aligned}
\end{equation}
which implies
\begin{equation}
\|u\|_{L^{p}(M)}\leq C_{\delta}\lambda^{1-\alpha+\sigma(p)}\|((-\lap)^{\alpha/2}-z+V)u\|_{L^2(M)} +C_{\delta}  \lambda^{\sigma(p)}\|u\|_{L^2(M)}.
\end{equation}

\section{ Proof of Theorem \ref{quasi2} for $n\ge3$}
 In this section we first prove \eqref{quassi} when $n\ge3$ and $p_c\le p<\frac{2n}{n+1-2\alpha}$ by applying Propositions \ref{p1}, \ref{p2} and \ref{p3}. Next, we use heat kernel method and spectral theorem to deal with \eqref{quassi} for $n\ge3$ and $p\ge\frac{2n}{n+1-2\alpha}$. At the end, we prove \eqref{quasssi} for $n\ge4$ and $\frac{2n}{n-2\alpha}<p\le \infty$.

Since $u\in {\rm Dom}(H_V)$ may not be in $C^\infty(M)$, we need to take adjoint in \eqref{b2}
\begin{equation}\label{repro3}
        I=((-\lap)^{\alpha/2}-\bar{z})(\tfrac{(\lambda-i)^{(2-\alpha)}}{\alpha/2}T_{\lambda,1}^{*}+ \tilde{T}_\lambda^{*})+((-\lap)^{\alpha/2}-\bar{z})(\tfrac{(\lambda-i)^{(2-\alpha)}}{\alpha/2}R_{\lambda,1}^{*} +\tilde{R}_\lambda^{*})
\end{equation}
where $\bar{z}=(\lambda-i)^\alpha$, and $T^*$ is the adjoint of $T$.

It is not hard to check the image $e^{-H_V}[L^2]$ is an operator core for $H_V$ by spectral theorem and the heat kernel bounds in Proposition \ref{bounds_heat_kernel}. By the heat kernel bounds, we have ${L^\infty}\supset e^{-H_V}[L^2]$, so to prove \eqref{quassi} for $n\ge3$ and $p_c\le p<\frac{2n}{n+1-2\alpha}$,  it suffices to show that for $u\in {L^\infty}\cap \text{Dom}(H_V)$,
\begin{equation}\label{7.2}
\begin{aligned}
|\int u\hspace{0.5mm}\psi \hspace{0.5mm}dx| \le  C_V\lambda^{\sigma(p)}\big(\lambda^{1-\alpha}\|(\lapp+V-(\lambda+i)^\alpha)u\|_2 +\|u\|_2\big)+ \tfrac12\|u\|_{p} \qquad\quad\\
{\rm for}\hspace{2mm} \psi \in C^\infty(M)\hspace{2mm}  {\rm with}\hspace{2mm}  \|\psi\|_{p^\prime}=1.
\end{aligned}
\end{equation}
If we abbreviate the left side as $|(u,\psi)| $, then by \eqref{repro3} above
\begin{align*}
|(u,\psi)| &\le |(u, (\lapp-\bar{z}) \tfrac{(\lambda-i)^{(2-\alpha)}}{\alpha/2}T_{\lambda,1}^{*}\psi)|+|(u, (\lapp-\bar{z}) \tilde{T}_\lambda^{*}\psi)| \\
&\hspace{1in}+|(u, (\lapp-\bar{z}) \tfrac{(\lambda-i)^{(2-\alpha)}}{\alpha/2}R_{\lambda,1}^{*}\psi)|+|(u, (\lapp-\bar{z}) \tilde{R}_\lambda^{*}\psi)|\\
&\le  |\big( (\lapp+V-\bar{z}) u, \tfrac{(\lambda-i)^{(2-\alpha)}}{\alpha/2}T_{\lambda,1}^{*}\psi\big)|+|\big((\lapp+V-\bar{z})u,  \tilde{T}_\lambda^{*}\psi\big)| +|(Vu,\tilde{T}_\lambda^{*}\psi)|\\
&\hspace{0.2in} +|(Vu,\tfrac{(\lambda-i)^{(2-\alpha)}}{\alpha/2}T_{\lambda,1}^{*}\psi)|
+|(u, (\lapp-\bar{z}) \tfrac{(\lambda-i)^{(2-\alpha)}}{\alpha/2}R_{\lambda,1}^{*}\psi)|+|(u, (\lapp-\bar{z}) \tilde{R}_\lambda^{*}\psi)|.
\end{align*}
Now we need to estimate the six terms on the right.

First, by duality, \eqref{c1} yields $ \|\frac{(\lambda-i)^{(2-\alpha)}}{\alpha/2}T_{\lambda,1}^{*}\|_{L^{p^\prime} \rightarrow L^2}\le C_\delta\lambda^{1-\alpha+\sigma(p)}$, and so
 \begin{equation}\label{7.3}
\begin{aligned}
|\big( (\lapp+V-\bar{z}) u, \tfrac{(\lambda-i)^{(2-\alpha)}}{\alpha/2}T_{\lambda,1}^{*}\psi\big)|&\le  \|(\lapp+V-\bar{z}) u\|_2 \|\tfrac{(\lambda-i)^{(2-\alpha)}}{\alpha/2}T_{\lambda,1}^{*}\psi\|_2 \\
 &\le C_\delta \lambda^{1-\alpha+\sigma(p)}  \|(\lapp+V-\bar{z}) u\|_2.
\end{aligned}
\end{equation}
Similarly by \eqref{c7}  we have $\|(\lapp-\bar{z}) \frac{(\lambda-i)^{(2-\alpha)}}{\alpha/2}R_{\lambda,1}^{*}\|_{L^{p^\prime} \rightarrow L^2}\le C_\delta\lambda^{\sigma(p)}$, so
 \begin{equation}\label{7.4}
|(u,(\lapp-\bar{z}) \tfrac{(\lambda-i)^{(2-\alpha)}}{\alpha/2}R_{\lambda,1}^{*}\psi)|\le  \| u\|_2\|(\lapp-\bar{z}) \tfrac{(\lambda-i)^{(2-\alpha)}}{\alpha/2}R_{\lambda,1}^{*}\psi\|_2\le C_\delta \lambda^{\sigma(p)} \| u\|_2.
\end{equation}
The operator $\tilde{T}_\lambda^{*}$ and $\tilde{R}_\lambda^{*}$ can be dealt with in the same way by \eqref{c4} and \eqref{c8}.

Next, we only need to handle the remaining two terms $|(Vu,\frac{(\lambda-i)^{(2-\alpha)}}{\alpha/2}T_{\lambda,1}^{*}\psi)|$ and $ |(Vu,\tilde{T}_\lambda^{*}\psi)|$. We follow from the same argument from previous section to see that, if we choose
$\delta>0$ small enough, we can find a collection of $\delta$-balls $B_j$ so that if $B_j^*$ is the double then:
$ \sum\limits_{j=1}\limits^{N_\delta}\textbf{1}_{B_j^*}(x)\leq C_{M}$ and $C_{M} C_0\sup\limits_{x\in M}\|V\|_{L^{n/\alpha}(B(x,2\delta))}\leq \frac12$, where $C_0=\max(C_1,C_2)$ with $C_1$, $C_2$ in Propositions \ref{p1} and \ref{p2}.

And similar to $T_{\lambda,1}$, the adjoint operator $T_{\lambda,1}^{*}$ is a ``local operator'', again by duality, \eqref{c2} yields
    \begin{equation}\label{f2}
     \| \tfrac{(\lambda-i)^{(2-\alpha)}}{\alpha/2}T_{\lambda,1}^{*} f\|_{L^{r^\prime}(B_j)}\leq C_{1} \|f\|_{L^{p^\prime }(B_j^*)}
  \end{equation}
 where $\frac1r=\frac1p+\frac\alpha n$ and $\frac1{r^\prime}=1-\frac1r$. Consequently,
\begin{equation}\label{Tbound}
\begin{aligned}
|(Vu,\tfrac{(\lambda-i)^{(2-\alpha)}}{\alpha/2}T_{\lambda,1}^{*}\psi)|&\le \sum\limits_j|(\textbf{1}_{B_j}Vu,\textbf{1}_{B_j}\tfrac{(\lambda-i)^{(2-\alpha)}}{\alpha/2}T_{\lambda,1}^{*}\psi)| \\
&\le \sum\limits_j \|Vu\|_{L^r(B_j)}\|\tfrac{(\lambda-i)^{(2-\alpha)}}{\alpha/2}T_{\lambda,1}^{*}\psi\|_{L^{r^\prime}(B_j)} \\
&\le C_0\sum\limits_j \|V\|_{L^{n/\alpha}(B_j)}\|u\|_{L^{p}(B_j)}\|\psi\|_{L^{p^\prime}(B_j^*)} \\
&\le \tfrac{1}{2C_M}\big(\sum\limits_j \|u\|_{L^{p}(B_j)}^{p}\big)^{\tfrac{1}{p}}\big(\sum\limits_j\|\psi\|_{L^{p^\prime}(B_j^*)}^{p^\prime}\big)^ {\tfrac{1}{p^\prime}} \\
&\le \tfrac12 \|u\|_{L^{p}(M)}.
\end{aligned}
\end{equation}

The term $|(Vu,\tilde{T}_\lambda^{*}\psi)|$ can be dealt with exactly same way by using \eqref{c5}. Therefore, we have proved \eqref{quassi} for $n\ge3$ and $p_c\le p<\frac{2n}{n+1-2\alpha}$ with $(\lambda+i)^\alpha$ replaced by $(\lambda-i)^\alpha$, but this can be done by choosing $z=(\lambda-i)^\alpha$ in the very beginning.

\vspace{8mm}Next, we prove \eqref{quassi} for $n\ge3$ and $p\ge\frac{2n}{n+1-2\alpha}$.

Let $R_\lambda : L^2 \rightarrow L^2$ denote the spectral projection operator associated with $P_V$  corresponding to the interval $(2\lambda,\infty)$, i.e., $R_\lambda=\textbf{1}_{P_V>2\lambda}$, so that
$$ R_\lambda f=\sum\limits_{\lambda_j>2\lambda} \langle f, e_j\rangle e_j
$$
where $\{e_j\}$ is an orthonormal basis of the eigenfunctions of $H_V$. And let $L_\lambda=\textbf{1}_{P_V\le2\lambda}$ denote the projection onto frequencies $\le\lambda$ so that $I=L_\lambda+R_\lambda$ if $R_\lambda$ is as above.

Now we may use the special case $p=p_c$ of  \eqref{quassi} we proved just now and the heat kernel bounds  in Proposition \ref{bounds_heat_kernel} to prove the ``low frequency estimates''
\begin{equation}\label{low}
\|L_\lambda u\|_{L^{p}(M)}\ls \lambda^{1-\alpha+\sigma(p )}\|(\lapp+V-(\lambda+i)^\alpha)u\|_{L^2(M)},  \quad \text{if} \hspace{2mm}p>p_c.
\end{equation}
First, by Young's inequality and Proposition \ref{bounds_heat_kernel}, we have the following:
\begin{equation}\label{young}
 \|e^{-tH_V}\|_{L^p(M)\rightarrow L^q(M)} \ls t^{-\frac n\alpha(\frac 1p-\frac 1q)}, \quad \text{if}\hspace{2mm} 0<t\le1, \ \text{and} \hspace{2mm} 1\le p\le q\le \infty.
\end{equation}
If we fix $t=\lambda^{-\alpha}$, then by spectral theorem the operator $$\tilde{L}_\lambda f= e^{\lambda^{-\alpha}H_V}L_\lambda f=\sum\limits_{\lambda_j\le2\lambda} e^{\lambda^{-\alpha}\lambda_j^{\alpha}} \langle f, e_j\rangle e_j
$$ satisfies the bound $ \|\tilde{L}_\lambda\|_{L^2\rightarrow L^2}\le C$.
So by using \eqref{quassi} for $p=p_c$ and \eqref{young},  we have for $p>p_c$
\begin{equation}
\begin{aligned}
\|L_\lambda u\|_{L^p(M)}&=\|e^{-\lambda^{-\alpha}H_V}L_\lambda e^{\lambda^{-\alpha}H_V}L_\lambda u\|_{L^p(M)}\ls \lambda^{n(\frac{1}{p_c}-\frac 1p)}\|L_\lambda \tilde{L}_\lambda u\|_{L^{p_c}(M)} \\
&\ls \lambda^{1-\alpha+\sigma(p_c)+n(\frac{1}{p_c}-\frac 1p)}\|(\lapp+V-(\lambda+i)^\alpha)\tilde{L}_\lambda u\|_{L^2(M)}
\end{aligned}
\end{equation}
Since $\sigma(p)= \frac{1}{p_c}+n(\frac{1}{p_c}-\frac 1p) $ and
\begin{align*}
  \|(\lapp+V-(\lambda+i)^\alpha)\tilde{L}_\lambda u\|_{L^2(M)}&=\|\tilde{L}_\lambda (\lapp+V-(\lambda+i)^\alpha)u\|_{L^2(M)}\\
&\le C\|(\lapp+V-(\lambda+i)^\alpha) u\|_{L^2(M)}.
\end{align*}
This proves the claim \eqref{low}.

Now we need to prove the ``high frequency estimates'' for the operator $R_\lambda$. We consider $n=3$ and $n\ge4$ separately.  Let $\{\beta_j\}_{j\ge0}$ be a sequence of bump functions on $\mathbb{R}$ satisfying $\beta_0(x)+ \sum_{j=1}^{\infty}\beta_j(x)=1 $ where $\beta_j(x)=\beta_1(2^{1-j}x)$
with $\text{supp}(\beta_1)\subset\{|x|\in(\frac12,2) \}$, and  $\text{supp}(\beta_0)\subset\{|x|\in[0,1) \}$. For each $\beta_j$, if we use \eqref{young} with $t=2^{-j}$, we have for $2\le p\le \infty$
\begin{equation}\label{heee}
\|\beta_j(H_V)f\|_{L^p}\ls  2^{j\frac n\alpha(\frac 12-\frac 1p)}\|e^{2^{-j}H_V}\beta_j(H_V)f\|_{L^2}\approx 2^{j\frac n\alpha(\frac 12-\frac 1p)}\|\beta_j(H_V)f\|_{L^2}.
\end{equation}

When $n=3$, by using \eqref{heee} we get
\begin{equation}\label{high2}
\begin{aligned}
 \|R_\lambda u\|_{L^\infty(M)}&\le \sum_{2^{j}\ge \lambda^\alpha}^{\infty}\|R_\lambda \beta_j(H_V)u\|_{L^\infty(M)}  \\
 &\ls \sum_{2^{j}\ge \lambda^\alpha}^{\infty}2^{j(\frac {3}{2\alpha}-1)}\|H_VR_\lambda u\|_{L^2(M)}\\
 &\ls  \|H_VR_\lambda u\|_{L^2(M)}\\
 &\approx \|(\lapp+V-(\lambda+i)^\alpha)R_\lambda u\|_{L^2(M)}
\end{aligned}
\end{equation}
where we use $\alpha>\frac32$ in the third inequality. Note that in \eqref{low} the exponent $1-\alpha+\sigma(p)\ge0$ if $p\ge \frac{2n}{n+1-2\alpha}$. By interpolation, this combined with the low frequency part \eqref{low} proves \eqref{quassi} for $n=3$, $\frac32<\alpha<2$ and $\frac{2n}{n+1-2\alpha}\le p\le \infty$.

When $n\ge4$, similar to $n=3$, it suffices to prove the endpoint case
\begin{equation}\label{high}
\|R_\lambda u\|_{L^{p_\alpha}(M)}\ls \|H_VR_\lambda u\|_{L^2(M)} \quad {\rm if} \hspace{2mm} p_\alpha:=\tfrac{2n}{n-2\alpha}
\end{equation}
since $\|H_VR_\lambda u\|_{L^2(M)}\approx \|(\lapp+V-(\lambda+i)^\alpha)R_\lambda u\|_{L^2(M)}$, and the exponent $1-\alpha+\sigma(p)\ge0$ if $p\ge \frac{2n}{n+1-2\alpha}$.

\vspace{2mm}By adding a constant if necessary, we assume $H_V$ is invertible, so we can represent the resolvent as
\begin{equation}
\begin{aligned}
H_V^{-1}&=\int_0^\infty e^{-H_Vt}dt\\
&=\int_0^1 e^{-H_Vt}dt+\int_1^\infty e^{-H_Vt}dt \\
&=H_0+H_1
\end{aligned}
\end{equation}
The operator $H_1$ is a good error term. To see this, by functional calculus, $$H_1=e^{-H_V} H_V^{-1}.$$ Then by \eqref{heee} we have
 \begin{equation}
    \begin{aligned}
    \|H_1f\|_{L^{p_\alpha}(M)}&\le \sum\limits_{j=0}^\infty  \|\beta_j(H_V)H_1 f\|_{L^{p_\alpha}(M)} \\
    &\ls \sum\limits_{j=0}^\infty 2^{j\frac n\alpha(\frac 12-\frac 1{p_\alpha})}\|\beta_j(H_V)H_1 f\|_{L^{2}(M)}\\
    &\ls \sum\limits_{j=0}^\infty 2^{j\frac n\alpha(\frac 12-\frac 1{p_\alpha})}e^{-2^j}2^{-j}\|f\|_{L^{2}(M)}\\
    &\ls\|f\|_{L^{2}(M)}.
    \end{aligned}
    \nonumber
\end{equation}
For the operator $H_0$, by using Proposition \ref{bounds_heat_kernel}, we have
\begin{equation}
|H_0(x,y)|=|\int_0^1 e^{-H_Vt}(x,y)dt|\ls \int_0^1 q_\alpha(t,x,y)dt \ls d_g(x,y)^{\alpha-n}.
\end{equation}
Since $2< p_\alpha<\infty$ for $n\ge4$, by Hardy-Littlewood-Sobolev inequality,
\begin{equation}
\|H_0f\|_{L^{p_\alpha}(M)}\ls \|f\|_{L^{2}(M)}.
\end{equation}
Hence we proved the claim \eqref{high}. Therefore, the proof of \eqref{quassi} for $n\ge3$ is complete.

Finally, when $n\ge4$ and $\frac{2n}{n-2\alpha}< p\le \infty $, by \eqref{heee}
if $N>\frac {n}{2}$
\begin{equation}
\begin{aligned}
 \|R_\lambda u\|_{L^p(M)}&\le \sum_{2^{j}\ge \lambda^\alpha}^{\infty}\|R_\lambda \beta_j(H_V))u\|_{L^p(M)}  \\
 &\ls \sum_{2^{j}\ge \lambda^\alpha}^{\infty}2^{j(\frac n\alpha(\frac{1}{2}-\frac 1p)-\frac N\alpha)}\|(I+H_V)^{N/\alpha}R_\lambda u\|_{L^2(M)}\\
 &\ls  \lambda^{-N+ n(\frac12-\frac1p)} \|(I+H_V)^{N/\alpha}R_\lambda u\|_{L^2(M)}.
\end{aligned}
\end{equation}
So \eqref{quasssi} follows from \eqref{low} and \eqref{high2}.

\section{Proof of Theorem \ref{quasi2} for $n=2$}
In this section, we prove \eqref{quassi} for $n=2$.  This is a unique case since we do not have inequalities like \eqref{c2}, \eqref{c5} for $\alpha\ge\frac53$. As in the previous section, it suffices to prove \eqref{7.2} for $p\in[p_c,\infty]=[6,\infty]$. By interpolation, we only need to prove it at the endpoints. We will first prove the case $p=\infty$, and then prove the case $p=p_c=6$.

As before, \eqref{c1} and \eqref{c7} yield \eqref{7.3} and \eqref{7.4}, respectively for all $p\in [p_c,\infty]=[6,\infty]$, and similarly if $1<\alpha<2$, then \eqref{c4} and \eqref{c8} can be used for the terms that include $\tilde{T}_\lambda^{*}$ and $\tilde{R}_\lambda^{*}$. So again we only need
to handle the remaining two terms $|(Vu,\frac{(\lambda-i)^{(2-\alpha)}}{\alpha/2}T_{\lambda,1}^{*}\psi)|$ and $ |(Vu,\tilde{T}_\lambda^{*}\psi)|$.

\subsection{The case $p=\infty$}
This case follows from the kernel estimates of $\frac{(\lambda+i)^{(2-\alpha)}}{\alpha/2}T_{\lambda,1}$ and $\tilde T_\lambda$, and the Kato condition \eqref{kato}.  We claim that for $\lambda\ge \delta^{-2/\alpha}$
\begin{equation}\label{kernel01}
  |\tfrac{(\lambda+i)^{(2-\alpha)}}{\alpha/2}T_{\lambda,1}(x,y)|\le C d_g(x,y)^{\alpha-2}\textbf{1}_{d_g(x,y)\le\delta},
\end{equation}
and
\begin{equation}\label{kernel02}
  |\tilde T_\lambda(x,y)|\le C d_g(x,y)^{\alpha-2}\textbf{1}_{d_g(x,y)\le\delta}
\end{equation}
where the constants are independent of $\lambda$ and $\delta$.
The proof of the claim will be given in the last two subsections. As in the preceding section, we may assume $u\in {L^\infty}\cap \text{Dom}(H_V)$, so the Kato condition $V\in\mathcal{K}_\alpha(M)$ and \eqref{kernel01} ensures that
$$ \tfrac{(\lambda+i)^{(2-\alpha)}}{\alpha/2} T_{\lambda,1} (Vu)(x)=\int_M \tfrac{(\lambda+i)^{(2-\alpha)}}{\alpha/2}T_{\lambda,1}(x,y)V(y)u(y) dy
$$
is given by an absolutely convergent integral, as is $|(Vu,\frac{(\lambda-i)^{(2-\alpha)}}{\alpha/2}T_{\lambda,1}^{*}\psi)|$. Hence by Fubini's theorem
$$ |(Vu,\tfrac{(\lambda-i)^{(2-\alpha)}}{\alpha/2}T_{\lambda,1}^{*}\psi)|=|(\tfrac{(\lambda+i)^{(2-\alpha)}}{\alpha/2}T_{\lambda,1}(Vu),\psi)|\le \|\tfrac{(\lambda+i)^{(2-\alpha)}}{\alpha/2}T_{\lambda,1}(Vu)\|_\infty.
$$
By the Kato condition \eqref{kato}, we have $ \|\frac{(\lambda+i)^{(2-\alpha)}}{\alpha/2}T_{\lambda,1}(Vu)\|_\infty \le \frac12\|u\|_\infty$, if $\delta$ is small enough. And the term $ |(Vu,\tilde{T}_\lambda^{*}\psi)|$ is handled exactly the same way.
This proves \eqref{7.2} when $p=\infty$.

\subsection{The case $p=p_c=6$}
When $n=2$ and $\frac{2n}{n+1}=\frac43<\alpha<\frac53$, we see that \eqref{c2} and \eqref{c5} are still valid at $p=p_c$, which imply the desired bounds like \eqref{Tbound} for these two terms at $p=p_c$. So we only need to consider $\frac53\le\alpha<2$. We claim that when $\frac53\le \alpha<2$
\begin{equation}\label{norm01}
  \|\tfrac{(\lambda+i)^{(2-\alpha)}}{\alpha/2}T_{\lambda,1}\|_{L^{\frac2\alpha}\to L^6}\ls \lambda^{-\frac13}
\end{equation}
and
\begin{equation}\label{norm02}
  \|\tilde T_\lambda\|_{L^{\frac2\alpha}\to L^6}\ls \lambda^{-\frac13}.
\end{equation}
The proof of the claim will be given in the last two subsections. As in the argument above it is enough to bound $\|(\frac{(\lambda+i)^{(2-\alpha)}}{\alpha/2}T_{\lambda,1}(Vu)\|_6$ and $\|\tilde{T}_\lambda(Vu)\|_6$. By \eqref{norm01},
$$\|\tfrac{(\lambda+i)^{(2-\alpha)}}{\alpha/2}T_{\lambda,1}(Vu)\|_6\ls\lambda^{-1/3}\|Vu\|_{\frac2\alpha}\ls\lambda^{-1/3}\|V\|_{\frac2\alpha}\|u\|_\infty.
$$
Since we are assuming that $V\in L^{\frac2\alpha}(M) $ and we just proved that
$$\|u\|_\infty\ls \lambda^{1/2} \lambda^{1-\alpha}\|(\lapp+V-(\lambda+i)^\alpha)u\|_{L^2(M)},
$$
we conclude that
$$\|\tfrac{(\lambda+i)^{(2-\alpha)}}{\alpha/2}T_{\lambda,1}(Vu)\|_6\ls \lambda^{1/6} \lambda^{1-\alpha}\|(\lapp+V-(\lambda+i)^\alpha)u\|_{L^2(M)}.
$$
The term $\|\tilde{T}_\lambda(Vu)\|_6$ can be dealt with by the same argument. So we finish the proof.

\subsection{Proof of \eqref{kernel01} and \eqref{norm01}}
Recall in \eqref{daa}, we have
\begin{equation}
T_{\lambda,1} f=\sum\limits_{j=0}^{j_0} S_jf
\nonumber
\end{equation}
with $\lambda^{-1}2^{j_0}\approx \delta$. We have obtained the kernel estimate for $\frac{(\lambda+i)^{(2-\alpha)}}{\alpha/2}S_0$ in \eqref{S0kernel} that
\begin{equation}\label{S0ker}|\tfrac{(\lambda+i)^{(2-\alpha)}}{\alpha/2}S_0(x,y)| \le C d_g(x,y)^{\alpha-2}.
\end{equation}
By the finite propagation speed property, $\frac{(\lambda+i)^{(2-\alpha)}}{\alpha/2}S_0(x,y)$ is supported in $\{d_g(x,y)\le 2\lambda^{-1}\}$.

For the operator $S_j$, we can use the following kernel bound (see  \cite[(2.29)]{BSSY}) for $n=2$
\begin{equation}
|S_j(x,y)|\le
\begin{cases}
C\lambda^{-1/2}\varepsilon^{-1/2} \hspace{3mm} \text{if} \hspace{2mm} d_g(x,y)\le 2\varepsilon \\
0  \hspace{22mm} \text{if} \hspace{2mm} d_g(x,y)> 2\varepsilon
\end{cases}
\end{equation}
where $\varepsilon=\frac{2^j}{\lambda}$  is a dyadic number with $\lambda^{-1}\le\varepsilon\le 2\delta$. By the finite propagation speed property, each $S_j(x,y)$ is supported in $\{d_g(x,y)\le \delta\}$. So if we summing over the dyadic numbers, we have
\begin{equation}
 |\sum\limits_{j=1}^{j_0} S_j(x,y) | \le
\begin{cases}
C\lambda^{-1/2}d_g(x,y)^{-1/2} \hspace{6mm} \text{if} \hspace{2mm} \lambda^{-1}\le d_g(x,y)\le \delta, \\
C  \hspace{33mm} \text{if} \hspace{2mm} d_g(x,y)< \lambda^{-1}.
\end{cases}
\end{equation}
Then the kernel of the operator $\frac{(\lambda+i)^{(2-\alpha)}}{\alpha/2}\sum\limits_{j=0}^{j_0} S_j$ satisfies
\begin{equation}
 |\tfrac{(\lambda+i)^{(2-\alpha)}}{\alpha/2}\sum\limits_{j=1}^{j_0} S_j(x,y) | \le
\begin{cases}
C\lambda^{3/2-\alpha}d_g(x,y)^{-1/2} \hspace{6mm} \text{if} \hspace{2mm} \lambda^{-1}\le d_g(x,y)\le \delta,  \\
C\lambda^{2-\alpha} \hspace{27.5mm} \text{if} \hspace{2mm} d_g(x,y)< \lambda^{-1},
\end{cases}
\end{equation}
which is better than \eqref{kernel01}. So we complete the proof of \eqref{kernel01}.

Next, to prove the operator norm estimate \eqref{norm01} for $\frac53\le \alpha<2$, we need to consider two cases $\alpha=\frac53$, and $\frac53<\alpha<2$ separately. When $\alpha=\frac53$, we recall \eqref{decomp}
\[\tfrac{(\lambda+i)^{(2-\alpha)}}{\alpha/2}T_{\lambda,1} =\tfrac{(\lambda+i)^{-1/3}}{\alpha/2}((-\lap-(\lambda+i)^{2})^{-1}-R_{\lambda,1}).\]
Using the resolvent estimates by Frank-Schimmer \cite[Theorem 1]{frank}, we have
\[\|(-\lap-(\lambda+i)^{2})^{-1}\|_{L^{6/5}\to L^6}\ls \lambda^{-\frac23}.\]
Moreover, we may use spectral projection bounds \cite{sogge88} as in the proof of \eqref{c1} to get
\[\|R_{\lambda,1}\|_{L^{6/5}\to L^6}\ls \lambda^{-\frac23}.\]
Thus, we finish the proof of \eqref{norm01} for $\alpha=\frac53$.

When $\frac53<\alpha<2$, the kernel estimate in \eqref{kernel01} is not precise enough to get the desired operator bounds. We need to exploit oscillatory integrals as in the proof of Proposition \ref{p1}. First, since the kernel of $\frac{(\lambda+i)^{(2-\alpha)}}{\alpha/2}S_0$ satisfies \eqref{S0ker} and is supported in $\{d_g(x,y)\le 2\lambda^{-1}\}$, by Young's inequality its norm satisfies the desired bound \eqref{norm01}. Next, as in the proof of Proposition \ref{p1}, we may write $S_j=K_j+T_j$ for $j\ge 1$, where the kernel of $K_j$ is supported in $\{d_g(x,y)\le 4\eps\}$, $\eps=2^j/\lambda$, and satisfies
\[|K_j(x,y)|\ls \eps^{1-n}\lambda^{-1}=\eps^{-1}\lambda^{-1}=2^{-j},\]
and $T_j$ satisfies the operator bound in \eqref{c3}. By Young's inequality and the summation of a geometric series,
\[\|\tfrac{(\lambda+i)^{(2-\alpha)}}{\alpha/2}\sum K_j\|_{L^{\frac2\alpha}\to L^6}\ls \sum_{j\ge1} 2^{\frac{4-3\alpha}3j}\lambda^{-\frac13}=\lambda^{-\frac13}.\]
Thus $\frac{(\lambda+i)^{(2-\alpha)}}{\alpha/2}\sum K_j$ satisfies the desired bound \eqref{norm01} when $\alpha>\frac43$. Moreover, as in the proof of Proposition \ref{p1}, by duality argument and interpolation, it is not hard to get (see Figure \ref{fig2})
\[ \|T_j\|_{L^{\frac2\alpha}\to L^6}\ls \begin{cases}\lambda^{\alpha-\frac73}2^{\frac{5-3\alpha}4j},\ \ \ \ \tfrac53<\alpha<\tfrac{17}9\\
\lambda^{\alpha-\frac73}2^{-\frac16j},\ \ \ \ \ \tfrac{17}9\le \alpha<2.
\end{cases}\]
By the summation of a geometric series, $\frac{(\lambda+i)^{(2-\alpha)}}{\alpha/2}\sum T_j$ also satisfies the bound \eqref{norm01} when $\alpha>\frac53$. Hence the proof of \eqref{norm01} is finished.

\subsection{Proof of \eqref{kernel02} and \eqref{norm02}}
 We first obtain the kernel estimates for $\tilde T_\lambda$, and use them to prove the operator bound \eqref{norm02}. We write
\begin{equation}
\begin{aligned}
        \tilde{T}_\lambda &=\frac{\sin(\pi \alpha/2)}{-\pi\sqrt{\gamma}}\int_0^\infty\int_0^\infty \frac{\gamma^{\alpha/2}\rho(t/\delta)e^{-\sqrt{\gamma}t}\cos tP }{\gamma^\alpha-2z\gamma^{\alpha/2}\cos(\pi \alpha/2)+z^2}d\gamma dt   \qquad \qquad \qquad \qquad \\
        &=\frac{\sin(\pi \alpha/2)}{\pi}\int_0^\infty\frac{\gamma^{\alpha/2} T_{0,\sqrt{\gamma}}}{\gamma^\alpha-2z\gamma^{\alpha/2}\cos(\pi \alpha/2)+z^2}d\gamma=\int_0^{\delta^{-2}}+\int_{\delta^{-2}}^\infty\\
        &:=T_1+T_2.
\end{aligned}
\end{equation}
By the finite propagation speed property, the kernels of $T_1$ and $T_2$ are supported in $\{d_g(x,y)\le \delta\}$.
Now we estimate the kernels of $T_1$ and $T_2$ separately.

\noindent \textbf{Case 1:} $ 0<\gamma\le \delta^{-2}$.

In this case, we first consider the multiplier associated to the operator $ T_{0,\sqrt{\gamma}}(P)$. By integration by parts (similar to \cite[Lemma 2.2]{sy}), it is not hard to get
\begin{equation}
|\tfrac{d^j}{d\tau^j}T_{0,\sqrt{\gamma}}(\tau)|\le C_j\delta^{-1}\gamma^{-1/2}(1+|\tau|)^{-2-j}.
\end{equation}
So the multiplier associated to the operator $T_1(P)$ satisfies
\begin{equation}\label{8.10}
|\tfrac{d^j}{d\tau^j}T_1(\tau)|\le C\int_0^{\delta^{-2}}\frac{\gamma^{\alpha/2} |\frac{d^j}{d\tau^j}T_{0,\sqrt{\gamma}}(\tau)|}{\gamma^\alpha+|z|^2}d\gamma \le
C\lambda^{-2\alpha}\delta^{-2-\alpha} (1+|\tau|)^{-2-j}.
\end{equation}
Since $-n$ order pesudodifferential operator in $\mathbb{R}^n$ has kernel bounded by $\log |x-y|^{-1}$ (see e.g. \cite{Taylor}), this implies
\[|T_1(x,y)|\le C \lambda^{-2\alpha}\delta^{-2-\alpha}|\log d_g(x,y)|\]
which is better than \eqref{kernel02} when $\lambda\ge\delta^{-2/\alpha}$.

\noindent \textbf{Case 2:} $ \gamma>\delta^{-2}$.

In this case, we write
 $T_{0,\sqrt{\gamma}} f=\sum\limits_{j=0}^{j_0} S_j^\gamma f$, where
 \begin{equation}
    S_0^\gamma f=\frac{-1}{\sqrt{\gamma}}\int_0^\infty\tilde{\rho}(\sqrt{\gamma} t)\rho(t/\delta)e^{-\sqrt{\gamma}t}\cos tP fdt
\end{equation}
 \begin{equation}
    S_j^\gamma f=\frac{-1}{\sqrt{\gamma}}\int_0^\infty\beta(\sqrt{\gamma}2^{-j}t)\rho(t/\delta)e^{-\sqrt{\gamma}t}\cos tP fdt,\ \ j=1,2,...
\end{equation}
By a simple integration by parts argument, it is not hard to prove that the multiplier associated to the operator $S_0^\gamma(P)$ satisfies
\begin{equation}\label{8.40}
|\tfrac{d^j}{d\tau^j}S_0^\gamma(\tau)|\le\begin{cases}
C_j|\tau|^{-2-j} \hspace{3mm} \text{if} \hspace{2mm}  |\tau|>2\sqrt{\gamma} \\
C_j\gamma^{-1-\frac j2} \hspace{5mm} \text{if} \hspace{2mm}  |\tau|\le2\sqrt{\gamma}\ .
\end{cases}
\end{equation}
So the multiplier associated to  $T_2^1(P):=\frac{\sin(\pi \alpha/2)}{\pi}\int_{\delta^{-2}}^\infty\frac{\gamma^{\alpha/2} S_0^\gamma(P)}{\gamma^\alpha-2z\gamma^{\alpha/2}\cos(\pi \alpha/2)+z^2}d\gamma  $ satisfies
\begin{equation}\label{8.14}
|\tfrac{d^j}{d\tau^j}T_2^1(\tau)|\le C\int_{\delta^{-2}}^\infty\frac{\gamma^{\alpha/2}| \tfrac{d^j}{d\tau^j}S_0^\gamma(\tau)|}{\gamma^\alpha+|z|^2}d\gamma \le \begin{cases}
C_j |\tau|^{-\alpha-j} \hspace{21mm} \text{if} \hspace{2mm}  |\tau|>2\lambda \\
C_j\lambda^{-\alpha}(1+|\tau|)^{-j}  \hspace{10mm} \text{if} \hspace{2mm}  |\tau|\le2\lambda
\end{cases}
\end{equation}
which implies
\[|T_2^1(x,y)|\le \begin{cases}
  C\lambda^{-\alpha}d_g(x,y)^{-2},\ \ \ d_g(x,y)>\lambda^{-1}\\
  Cd_g(x,y)^{\alpha-2},\ \ \ \ \ \ \ d_g(x,y)\le \lambda^{-1}.
\end{cases}\]
It is better than \eqref{kernel02}.

Now we only need to deal with $T_2^2(P):=\sum\limits_{j=1}^{j_0}\frac{\sin(\pi \alpha/2)}{\pi}\int_{\delta^{-2}}^\infty\frac{\gamma^{\alpha/2} S_j^\gamma(P)}{\gamma^\alpha-2z\gamma^{\alpha/2}\cos(\pi \alpha/2)+z^2}d\gamma$. If we follow the main strategy in \cite[Proof of (2.29)]{BSSY} and use \cite[Proposition 2.3]{sy} to refine the oscillatory integral estimates in \cite[Proposition 2.4]{BSSY}, we have
\begin{equation}
|S_j^\gamma(x,y)|\le
\begin{cases}
C\gamma^{-\frac12}\eps^{-1} \hspace{8mm} \text{if} \hspace{2mm} d_g(x,y)\le \varepsilon \\
0  \hspace{20.7mm} \text{if} \hspace{2mm} d_g(x,y)> \varepsilon
\end{cases}
\end{equation}
where $\varepsilon=\frac{2^j}{\sqrt{\gamma}}$  is a dyadic number with $\gamma^{-\frac12}\le\varepsilon\le \delta$. So if we summing over the dyadic numbers $\text{max}\big(\gamma^{-\frac12}, d_g(x,y)\big)\le\varepsilon\le \delta$, we have
\begin{equation}
 |\sum\limits_{j=1}^{j_0} S_j^\gamma(x,y) | \le
\begin{cases}
C\gamma^{-\frac12}d_g(x,y)^{-1} \hspace{11mm} \text{if} \hspace{2mm} \gamma^{-1/2}\le d_g(x,y)\le \delta, \\
C  \hspace{33mm} \text{if} \hspace{2mm} d_g(x,y)< \gamma^{-1/2}.
\end{cases}
\end{equation}
Hence the kernel  $T_2^2(x,y)=\sum\limits_{j=1}^{j_0}\frac{\sin(\pi \alpha/2)}{\pi}\int_{\delta^{-2}}^\infty\frac{\gamma^{\alpha/2} S_j^\gamma(x,y)}{\gamma^\alpha-2z\gamma^{\alpha/2}\cos(\pi \alpha/2)+z^2}d\gamma $ satisfies for $\alpha>1$
\begin{equation}\label{8.17}
|T_2^2(x,y)|\le C\int_{\delta^{-2}}^\infty\frac{\gamma^{\alpha/2}}{\gamma^\alpha+|z|^2}|\sum\limits_{j=1}^{j_0} S_j^\gamma(x,y)|d\gamma \le \begin{cases}
C\lambda^{1-\alpha} d_g(x,y)^{-1}  \hspace{10mm} \text{if} \hspace{2mm}  \lambda^{-1}\le d_g(x,y)\le \delta\\
C d_g(x,y)^{\alpha-2}  \hspace{16mm} \text{if} \hspace{2mm}  d_g(x,y)\le \lambda^{-1}
\end{cases}
\end{equation}
which is better than \eqref{kernel02}. So \eqref{kernel02} is proved. Consequently, the operator bound \eqref{norm02} follows immediately from three kernel estimates above and Young's inequality. In particular, we need  $\alpha>\frac43$ when using Young's inequality to estimate $\|T_2^2\|_{L^\frac2\alpha\to L^6}$.

\section{Strichartz estimates for the fractional wave equation}
In this section we will use the spectral projection estimates to prove Strichartz estimates for $H_V=\lapp+V$. As above, without loss of
generality, we shall assume that $H_V\ge0$. The following theorem generalizes the result for $(-\Delta_g)^{\alpha/2}$ by Dinh \cite[Corollary 1.4]{dinh}, and also generalizes the result for $-\Delta_g+V$ by Blair-Sire-Sogge \cite[Theorem 8.1]{BSS19}.
\begin{theorem}
Let $(M,g)$ be a compact Riemannian manifold of dimension $n\ge 2$. Suppose that $\frac{2n}{n+1}<\alpha<2$ and $V\in L^{n/\alpha}(M)\cap \mathcal{K}_\alpha(M)$. Let $u$ be a solution of
\begin{equation}
\begin{cases}
\big(\partial_t^2+\lapp+V\big)u=0  \\
u\big|_{t=0}=f_0, \hspace{2mm}  \partial u\big|_{t=0}=f_1
\end{cases}
\end{equation}
Then
\begin{equation}\label{stri}
\|u\|_{L^{\frac{2(n+1)}{n-1}}([0,1]\times M)} \ls \|(I+Q_V)^\gamma f_0\|_{L^2(M)} +\|(I+Q_V)^{\gamma-1} f_1\|_{L^2(M)}
\end{equation}
with $Q_V$ denoting $\sqrt{H_V}$, and $\gamma=\frac1\alpha+\frac1{p_c}(\frac2\alpha-1)$.
\end{theorem}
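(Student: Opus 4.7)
By the spectral theorem applied to the non-negative self-adjoint operator $H_V$, the solution can be written
\[
u(t,\cdot) = \cos(tQ_V) f_0 + Q_V^{-1}\sin(tQ_V) f_1.
\]
The plan is to dyadically decompose the spatial frequencies: let $\{\beta_k\}_{k\ge 0}$ be a smooth partition of unity adapted to $Q_V\in[2^{k-1},2^{k+1}]$, so that $u = \sum_k \beta_k(Q_V)u$. For $p_c=\tfrac{2(n+1)}{n-1}\ge 2$, the Littlewood--Paley square-function inequality for $Q_V$ (valid via the heat-kernel bounds of Proposition \ref{bounds_heat_kernel} and standard spectral multiplier theory) reduces the Strichartz inequality to the per-band estimate
\[
\|\beta_k(Q_V)u\|_{L^{p_c}([0,1]\times M)} \ls 2^{k\gamma}\|\beta_k(Q_V)f_0\|_{L^2} + 2^{k(\gamma-1)}\|\beta_k(Q_V)f_1\|_{L^2}
\]
uniformly in $k$.

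To establish the per-band bound, the key input is Corollary \ref{coro1}. A single LP band $\beta_k(Q_V)$ corresponds to $P_V\in[\lambda, 2\lambda]$ with $\lambda = 2^{2k/\alpha}$, containing $\sim\lambda$ unit $P_V$-intervals. Applying Corollary \ref{coro1} on each unit interval, summing via Cauchy--Schwarz over the band, and using the $L^2$ energy bound $\|\beta_k(Q_V)u(t,\cdot)\|_{L^2_x}\le \|\beta_k(Q_V)f_0\|_{L^2}+2^{-k}\|\beta_k(Q_V)f_1\|_{L^2}$ pointwise in $t\in[0,1]$, followed by $t$-integration, produces the per-band bound with naive exponent $\gamma_0 = \tfrac{2}{\alpha}(\sigma(p_c)+\tfrac12) = \tfrac{1}{\alpha}+\tfrac{2}{\alpha p_c}$ on the $Q_V$-scale; here $\sigma(p_c)=1/p_c$ by the choice of $p_c$.

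The main obstacle is that $\gamma_0$ exceeds the claimed scale-invariant exponent $\gamma = \tfrac{1}{\alpha}+\tfrac{1}{p_c}(\tfrac{2}{\alpha}-1)$ by exactly $1/p_c$, a loss incurred at the Cauchy--Schwarz step over the $\sim\lambda$ unit bands. To remove it, one must exploit the time oscillation of $\cos(tQ_V)$ on each LP band, in the spirit of the Kapitanski--Mockenhaupt--Seeger--Sogge scheme for the classical wave equation: split $[0,1]$ into sub-intervals of length $\sim 2^{-k\alpha/2}$ (the natural dispersive scale at $Q_V$-frequency $2^k$), build a sharp parametrix for $\cos(tQ_V)$ on each sub-interval via the wave-kernel construction of Section 2, and run a $TT^*$ argument with the variable-coefficient uniform Sobolev bounds of Propositions \ref{p1}--\ref{p2} as input. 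This yields the scale-invariant Strichartz bound for each LP band. For $V\equiv 0$ this recovers Dinh's theorem \cite[Corollary 1.4]{dinh}; the potential $V\in L^{n/\alpha}\cap\mathcal K_\alpha$ is absorbed perturbatively as in Sections 4--6, with the Kato condition supplying the small-scale smallness needed to close the bootstrap. Summation over sub-intervals (triangle inequality) and over LP bands (square-function inequality) then delivers the stated estimate.
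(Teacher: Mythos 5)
Your setup is on the right track up through the diagnosis: you correctly build $u$ from the spectral calculus of $Q_V$, you correctly note that a single $Q_V$-band $[2^{k-1},2^{k+1}]$ contains $\approx \lambda = 2^{2k/\alpha}$ unit $P_V$-intervals, you correctly compute that applying Corollary \ref{coro1} on each and summing by Cauchy--Schwarz yields the per-band factor $2^{k\gamma_0}$ with $\gamma_0 = \tfrac{2}{\alpha}(\sigma(p_c)+\tfrac12)$, and you correctly identify the overshoot $\gamma_0-\gamma = 1/p_c$. The problem is the remedy. You claim that removing the $1/p_c$ loss ``must'' be done by a Kapitanski--Mockenhaupt--Seeger--Sogge-style scheme: a sharp wave-kernel parametrix for $\cos(tQ_V)$ on dispersive sub-intervals of $[0,1]$, a $TT^*$ argument, and perturbative absorption of $V$. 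This is not developed, and it is unlikely to be executable here: $V$ is merely $L^{n/\alpha}\cap\mathcal K_\alpha$ and $(-\Delta_g)^{\alpha/2}$ is nonlocal, which is exactly why the paper's Section 2 goes to great lengths to build a reproducing identity from the \emph{free} fractional resolvent rather than a parametrix for the perturbed half-wave. You are proposing to rebuild the hard analysis that the paper was designed to avoid.

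The key idea you are missing is much softer, and it is the Burq--Lebeau--Planchon observation (the paper cites \cite[Theorem 2.1]{BLP}): spectral cluster bounds alone imply Strichartz, via Sobolev embedding in \emph{time} combined with almost-orthogonality in \emph{time}. Concretely, one fixes $\rho\in\mathcal S(\mathbb R)$ with $\operatorname{supp}\hat\rho\subset(-\tfrac12,\tfrac12)$, writes
\begin{equation}
\|\rho(t)e^{itQ_V}f\|_{L^{p_c}(\mathbb R\times M)}\lesssim \bigl\| |D_t|^{1/2-1/p_c}\bigl(\rho(t)e^{itQ_V}f\bigr)\bigr\|_{L_x^{p_c}L_t^2},
\end{equation}
and then decomposes $f=\sum_k \chi_{k,\alpha}f$ into \emph{unit} $Q_V$-windows. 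Because $\hat\rho$ has width $<1$, the temporal Fourier transforms of the resulting pieces $F_k$ are supported in essentially disjoint $\tau$-intervals of length $O(1)$ near $\tau\approx k$, so the $F_k$ are almost orthogonal in $L^2_t$. Since $p_c>2$, Minkowski gives $\ell^2$ summation over $k$ with no loss, the $\tau$-integral per band is over an interval of length $O(1)$ and picks up $|\tau|^{1-2/p_c}\approx (1+k)^{1-2/p_c}$, and combining with the cluster bound $\|\chi_{k,\alpha}f\|_{L^{p_c}}\lesssim (1+k)^{\frac1\alpha-\frac12+\frac{2}{\alpha p_c}}\|\chi_{k,\alpha}f\|_{L^2}$ produces the total exponent $\tfrac12-\tfrac1{p_c}+\tfrac1\alpha-\tfrac12+\tfrac{2}{\alpha p_c}=\gamma$. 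The $1/p_c$ saving thus comes for free from the temporal Sobolev weight $|\tau|^{1/2-1/p_c}$ together with the compactly supported time cutoff; no parametrix and no $TT^*$ are needed. As stated, your proposal has a genuine gap: the crucial gain is asserted but not achieved, and the route you propose would require substantial new analysis that is unlikely to close for singular $V$.
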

\begin{remark} {\rm As in Remark \ref{rem1}, the range of $\alpha$ here can be improved if the potential $V$ has better regularity. For example, \eqref{stri} holds for $0<\alpha<2$ if $V\equiv0$, and $1\le \alpha<2$ if $V\in L^\infty(M)$. The exponent $\gamma$ does not depend on $V$, and agrees with the one in \cite{dinh}. In \cite[Theorem 2.1]{BLP}, the authors show that the Strichartz estimates follows from the spectral projection bounds, and their proof works equally well in our circumstances. See also \cite{Nic}, \cite{BSS19}.}
\end{remark}
\begin{proof}
If, as above, $p_c=\frac{2(n+1)}{n-1}$, then to prove \eqref{stri}  it suffices to show that
\begin{equation}
\|e^{itQ_V}f\|_{L^{p_c}([0,1]\times M)} \ls \|(I+Q_V)^\gamma f\|_{L^2(M)}    \quad {\rm if}\hspace{2mm} \gamma=\tfrac1\alpha+\tfrac1{p_c}(\tfrac2\alpha-1).
\end{equation}

To prove this, it suffices to prove that whenever we fix $\rho\in \mathcal{S} (\mathbb{R})$ satisfying supp $\hat{\rho}\subset (-\frac12,\frac12)$ we have
\begin{equation}\label{local3}
\|\rho(t)e^{itQ_V}f\|_{L^{p_c}(\mathbb{R}\times M)} \ls \|(I+Q_V)^\gamma f\|_{L^2(M)}.
\end{equation}

Let
$$ \chi_{k,\alpha}f:=\sum\limits_{\lambda_j^{\alpha/2}\in(k,k+1]}E_jf, \quad E_jf=\langle f,e_{\lambda_j}\rangle e_{\lambda_j}
$$
so that $f=\sum_{k=0}^\infty \chi_{k,\alpha}f $. We split \[\chi_{k,\alpha}f=\sum_{k^{2/\alpha}<\lambda\le (k+1)^{2/\alpha}}\chi_\lambda^Vf\]
and there are $\ls (1+k)^{\frac2\alpha-1}$ terms.
By Corollary \ref{coro1}
\begin{equation}\label{spec}
\| \chi_\lambda^Vf\|_{L^{p_c}(M)} \ls (1+\lambda)^{1/p_c} \|f\|_{L^2(M)}
\end{equation}
Since $p_c>2$, by using Cauchy-Schwartz inequality, we have
\begin{equation}\label{specc}
\begin{aligned}
\| \chi_{k,\alpha}f\|_{L^{p_c}(M)}&\ls (1+k)^{\frac1\alpha-\frac12}  \|\big( \sum|\chi_\lambda^Vf|^2\big)^{\frac12}\|_{L^{p_c}(M)} \\
&\ls (1+k)^{\frac1\alpha-\frac12} \big(\sum  \| \chi_\lambda^Vf\|_{L^{p_c}(M)}^2\big)^{\frac12} \\
&\ls (1+k)^{\frac1\alpha-\frac12} \lambda^{\frac1{p_c}} \big(\sum  \| \chi_\lambda^Vf\|_{L^{2}(M)}^2\big)^{\frac12} \\
 &\le (1+k)^{\frac1\alpha-\frac12+\frac{2}{\alpha p_c}} \|f\|_{L^2(M)}
\end{aligned}
\end{equation}

To use \eqref{specc},  we first note that by Sobolev estimates

\begin{equation}
\|\rho(t)e^{itQ_V}f\|_{L^{p_c}(\mathbb{R}\times M)} \ls \||D_t|^{1/2-1/p_c}\big(\rho(t)e^{itQ_V} f\big)\|_{L_x^{p_c}L_t^2(\mathbb{R}\times M)}
\end{equation}
\newline
\noindent  If we let
$$ F(t,x)=|D_t|^{1/2-1/p_c}\big(\rho(t)e^{itQ_V} f(x)\big)
$$
denote the function inside the mixed-norm in the right, then
$$ F(t,x)=\sum\limits_{k=0}\limits^\infty F_k(t,x)
$$
where
$$ F_k(t,x)=|D_t|^{1/2-1/p_c}\big(\rho(t)e^{itQ_V} \chi_{k,\alpha}f(x)\big).
$$
Therefore, its $t$-Fourier transform is
\begin{equation}\label{8.6}
\hat{F_k}(\tau,x)=|\tau|^{1/2-1/p_c}\sum\limits_{\lambda_j^{\alpha/2}\in(k,k+1]}\hat{\rho}(\tau-\lambda_j^{\alpha/2})E_jf(x).
\end{equation}
Since we are assuming supp$\hat{\rho}\subset(-\frac12,\frac12)$, we get
$$\int_{-\infty}^\infty F_k(t,x)\overline{F_\ell(t,x)} dt=(2\pi)^{-1}\int_{-\infty}^\infty \hat{F_k}(\tau,x)\overline{\hat{F_\ell}(\tau,x)} d\tau=0
\hspace{4mm} \text{when} \hspace{2mm}|k-\ell|>10.
$$
Then
\begin{equation}
\begin{aligned}
\big(\int_{-\infty}^\infty \big| |D_t|^{1/2-1/p_c}&\big(\rho(t)e^{itQ_V} \chi_{k,\alpha}f(x)\big)\big|^2 dt\big)^{1/2} \\
&\ls \big(\int_{-\infty}^\infty \sum\limits_{k=0}\limits^\infty\big| F_k(t,x)\big|^2 dt\big)^{1/2} =(2\pi)^{-1}\big(\int_{-\infty}^\infty \sum\limits_{k=0}\limits^\infty\big| \hat{F_k}(\tau,x)\big|^2 d\tau\big)^{1/2}.
\nonumber
\end{aligned}
\end{equation}
Since $p_c>2$, we conclude that this implies that the square of the left side of \eqref{local3} is dominated by
$$\sum\limits_{k=0}\limits^\infty \int_{-\infty}^\infty \| \hat{F_k}(\tau,x)\|_{L^{p_c}(M)}^2 d\tau.
$$
Recalling \eqref{8.6}, the support properties of $\hat{\rho}$, we see that this along with \eqref{specc} and orthogonality imply that
the square of the  left side of \eqref{local3} is dominated by
\begin{equation}
\begin{aligned}
\sum\limits_{k=0}\limits^\infty \int_{-\infty}^\infty |\tau|^{1-2/p_c}\| &\sum\limits_{\lambda_j\in(k,k+1]}\hat{\rho}(\tau-\lambda_j)E_jf\|_{L^{p_c}(M)}^2 d\tau \\
&=\sum\limits_{k=0}\limits^\infty \int_{k-10}^{k+10} |\tau|^{1-2/p_c}\Big\| \sum\limits_{\lambda_j^{\alpha/2}\in(k,k+1]}\hat{\rho}(\tau-\lambda_j^{\alpha/2})E_jf\Big\|_{L^{p_c}(M)}^2 d\tau \\
&\ls \sum\limits_{k=0}\limits^\infty (1+k)^{1-2/p_c}(1+k)^{\frac2\alpha-1+\frac{4}{\alpha p_c}} \|\chi_{k,\alpha}f\|_{L^2(M)}^2 \\
&= \sum\limits_{k=0}\limits^\infty \|(1+k)^\gamma \chi_{k,\alpha}f\|_{L^2(M)}^2 =\|(I+Q_V)^\gamma f\|_{L^2(M)}^2
\end{aligned}
\end{equation}
as desired, which completes the proof.
\end{proof}

\bibliographystyle{plain}

\begin{thebibliography}{0}
\bibitem{BLP} Nicolas Burq, Gilles Lebeau, and Fabrice Planchon, Global existence for energy critical waves in 3-D
domains, J. Amer. Math. Soc. 21 (2008), no. 3, 831–845.
\bibitem{BSS19} M. Blair, Y. Sire, C. Sogge, Quasimode, eigenfunction and spectral projection bounds for Schr\"odinger operators on manifolds with critically singular potentials, J Geom Anal (2019). https://doi.org/10.1007/s12220-019-00287-z
\bibitem{BSS} K. Bogdan, A. St\'os and P. Sztonyk, Harnack inequality for stable processes on d-sets, Studia Math. 158 (2003), 163-198.
\bibitem{bj07} K. Bogdan and T. Jakubowski, Estimates of heat kernel of fractional Laplacian pertubed by gradient operators, Commun. Math. Phys. 271 (2007), 179-198.
\bibitem{BSSY} J. Bourgain, P. Shao, C.D. Sogge, and X. Yao, On $L^p$-resolvent estimates and the density of eigenvalues for compact Riemannian manifolds, Communications in Mathematical Physics 333.3 (2015): 1483-1527.
\bibitem{carmona} R. Carmona, W. Masters and  B. Simon, Relativistic Schr\"dinger operators: asymptotic behavior of the eigenfunctions. J. Funct. Anal. 91 (1990), no. 1, 117-142.
\bibitem{MSbook} Celso Martinez Carracedo and Miguel Sanz Alix, The theory of fractional powers of operators, volume
187 of North-Holland Mathematics Studies. North-Holland Publishing Co., Amsterdam, 2001.
\bibitem{cks} Z.-Q. Chen, P. Kim, and R. Song, Stability of Dirichlet heat kernel estimates for non-local operators under Feynman-Kac perturbation, Transactions of the American Mathematical Society 367.7 (2015): 5237-5270.
\bibitem{chensong} Z.-Q. Chen and R. Song, Conditional gauge theorem for non-local Feynman-Kac transforms, Probab. Th. rel. Fields,125(2003), 45-–72.
\bibitem{daubechies} I. Daubechies and E. H. Lieb, One-electron relativistic molecules with Coulomb interaction. Comm. Math. Phys. 90 (1983), no. 4, 497-510.
\bibitem{dinh} V. D. Dinh, Strichartz estimates for the fractional Schrödinger and wave equations on compact manifolds without boundary, J. of Diff. Eq., 12 (2017), 8804-8837
\bibitem{FKS}D. Ferreira, C.E. Kenig and M. Salo, On $L^p$ resolvent estimates for Laplace-–Beltrami operators on compact manifolds, Forum Mathematicum. Vol. 26. No. 3. De Gruyter, 2014.
\bibitem{frank} R.L. Frank and L. Schimmer, Endpoint resolvent estimates for compact Riemannian manifolds. Journal of Functional Analysis 272.9 (2017): 3904-3918.
\bibitem{FLS1} R. L. Frank, E. H. Lieb and R. Seiringer, Stability of relativistic matter with magnetic fields for nuclear charges up to the critical value. Comm. Math. Phys. 275 (2007), no. 2,479-489.
\bibitem{FLS2} R. L. Frank, E. H. Lieb and R. Seiringer,  Hardy-Lieb-Thirring inequalities for fractional Schr\"odinger operators. J. Amer. Math. Soc. 21 (2008), no. 4, 925-950.
\bibitem{GG} H. Gimperlein and G. Grubb, Heat kernel estimates for pseudodifferential operators, fractional Laplacians and Dirichlet-to-Neumann operators, J. Evol. Equ. 14 (2014), 49-83.
\bibitem{HSZ} X. Huang, Y. Sire and C. Zhang, Interior estimates for the eigenfunctions of the fractional Laplacian on a bounded Euclidean domain, arXiv:1907.08107.
\bibitem{KRS} C. E. Kenig, A. Ruiz, and C. D. Sogge. Uniform Sobolev inequalities and unique continuation
for second order constant coefficient differential operators. Duke Math. J., 55:329–347, 1987.
\bibitem{Laskin}N. Laskin, Fractional quantum mechanics and L\'evy path integrals, Phys. Lett. A, 268 (2000), 298-305.
\bibitem{Laskin2}N. Laskin, Fractals and quantum mechanics, Chaos: An Interdisciplinary Journal of Nonlinear Science 10.4 (2000): 780-790.
\bibitem{Laskin3}N. Laskin, Fractional Schr\"odinger equation, Physical Review E 66.5 (2002): 056108.
\bibitem{LY} P. Li and S.T. Yau, On the parabolic kernel of the Schr\"odinger operator, Acta Math. 156 (1986), 153-201.
\bibitem{LiebYau1} E. H. Lieb and  H.-T. Yau,  The Chandrasekhar theory of stellar collapse as the limit of quantum mechanics. Comm. Math. Phys. 112 (1987), no. 1, 147-174.

\bibitem{LiebYau2} E. H. Lieb and H.-T. Yau, The stability and instability of relativistic matter. Comm. Math. Phys. 118 (1988), no. 2, 177-213.

\bibitem{Nic} F. Nicola, Slicing surfaces and the Fourier restriction conjecture, Proc. Edinb. Math. Soc. (2) 52
(2009), no. 2, 515–527.



\bibitem{sal} L. Saloff-Coste, A note on Poincar\'e, Sobolev, and Harnack inequalities, IMRN. 1992, No. 2

\bibitem{sy} P. Shao and X. Yao, Uniform Sobolev resolvent estimates for the Laplace-Beltrami operator on compact manifolds, IMRN 2014.12 (2014): 3439-3463
\bibitem{sogge88} C. Sogge, Concerning the $L^p$ norm of spectral clusters for second-order elliptic operators on compact manifolds. J. Funct. Anal., 77(1):123-138, 1988.
\bibitem{fio} C. Sogge, Fourier integrals in classical analysis, Cambridge University Press, 1993.
\bibitem{song2} R. Song, Two-sided estimates on the density of the Feynman-Kac semigroups of stable-like processes, Electronic Journal of Probability, Vol. 11 (2006), Paper no. 6, pages 146-161.

\bibitem{stein} E.M. Stein, Harmonic Analysis: real-variable methods, orthogonality, and osciollatory integrals, Princeton University Press, Princeton, NJ, 1993.
\bibitem{sturm} K.T. Sturm, Heat kernel bounds on manifolds, Math. Ann. 292(1992), 149-–162.

\bibitem{Taylor} M. Taylor, Pesudodifferential operators and nonlinear PDE. Progress in Mathematics, 100. Birkh\"auser Boston, Inc., Boston, MA, 1991.
\bibitem{wz15} F. Wang and X.C. Zhang, Heat kernel for fractional diffusion operatoers with pertubations, Forum Math. 27 (2015), 973-994
  \end{thebibliography}

\end{document}